\documentclass[smallextended]{svjour3}       

%
\RequirePackage{fix-cm}
\smartqed  
\usepackage{graphicx}
%
%
\usepackage{amssymb,amsmath}
%
\newcommand{\Singular}{\textbf{\textsc{Singular}}}
\newcommand{\Mathematica}{\textbf{\textsc{Mathematica}}}
\newcommand{\QeHopf}{\textbf{\textsc{QeHopf}}}
\newcommand{\Maple}{\textbf{\textsc{Maple}}}
\newcommand{\Redlog}{\textbf{\textsc{Redlog}}}
\newcommand{\QEPCADB}{\textbf{\textsc{QEPCAD B}}}

\newcommand{\Reduce}{\textbf{\textsc{REDUCE}}}
\newcommand{\Macaulay}{\textbf{\textsc{Macaulay}}}

\newcommand{\deriv}[1]{\frac{\mathrm{d}#1}{\mathrm{d}t}}
\newcommand{\delete}[1]{}

\newcommand{\la}{\langle}
\newcommand{\ra}{\rangle}
\newcommand{\be}{\begin{equation}}
\newcommand{\ee}{\end{equation}}
\newcommand{\logicaland}{\,\land\,}

\newcommand{\N}{\mathbb{N}}
\newcommand{\np}{\mathbb{N}_0}
\newcommand{\Q}{\mathbb{Q}}
\newcommand{\R}{\mathbb{R}}
\newcommand{\vv}{{\bf V}}
\newcommand{\xb}{{\bf x}}
\newcommand{\Z}{\mathbb{Z}}

\newcommand{\fs}{f_1,\dots,f_s}
\newcommand{\kxn}{k[x_1,\dots,x_n]}

\newcommand{\alphab}{{\boldsymbol{\alpha}}}
\newcommand{\betab}{{\boldsymbol{\beta}}}

%
\journalname{Reaction Kinetics, Mechanisms and Catalysis}
\begin{document}

\title{On three genetic repressilator topologies\thanks{Ma\v sa Dukari\'c and
Valery Romanovski  are  supported by the Slovenian Research Agency (program P1-0306) and  by  a Marie Curie International Research Staff Exchange Scheme Fellowship
within the 7th European Community Framework Programme,
FP7-PEOPLE-2012-IRSES-316338. The work has also been partially supported by the Hungarian-Slovenian cooperation projects  T\'ET\_16-1-2016-0070
and BI-HU-17-18-011. Roman Jerala and Tina Lebar are supported by Slovenian Research Agency project J1-6740 and program P4-0176. Tina Lebar is partially supported by the UNESCO-L'OREAL national fellowship "For Women in Science". J\'anos T\'oth also acknowledges the support by
the National Research, Development and Innovation Office (SNN 125739).}
}

\author{Ma\v sa Dukari\' c \and
Hassan Errami \and Roman Jerala \and
Tina Lebar \and
Valery G. Romanovski \and
J\'anos T\'oth$^*$ \and
Andreas Weber
}

\authorrunning{Dukari\' c,
Errami,
Jerala,
Lebar,
Romanovski,
T\'oth,
Weber}

\titlerunning{
On three genetic repressilator topologies
}

\institute{M. Dukari\' c \at
Center for Applied Mathematics and Theoretical Physics\\
University of Maribor, Mladinska 3, SI-2000, Slovenia\\
\email{masa.dukaric@gmail.com}
\and
H. Errami \at
Institut f\"{u}r Informatik II, Universit\"{a}t Bonn, Bonn, Germany\\
\email{errami@informatik.uni-bonn.de}
\and
R. Jerala and T. Lebar \at
Department for Synthetic Biology and Immunology, National Institute of Chemistry\\
Hajdrihova 19, 1000 Ljubljana, Slovenia\\
\email{Tina.Lebar@ki.si and Roman.Jerala@ki.si}
\and
V. G. Romanovski \at
Faculty of Electrical Engineering and Computer Science,
University of Maribor,\\
Koro\v ska cesta 46,  Maribor, SI-2000 Maribor, Slovenia\\
Center for Applied Mathematics and Theoretical Physics,
University of Maribor,\\ Mladinska 3, SI-2000, Slovenia\\
Faculty of Natural Science and Mathematics, University of Maribor\\
Koro\v ska cesta 160, SI-2000 Maribor, Slovenia\\
\email{Valerij.Romanovskij@um.si}
\and
J. T\'oth corresponding author\at
Department of Mathematical Analysis, Budapest
University of Technology and Economics\\
Budapest, Egry J. u. 1., Hungary, H-1111\\
Chemical Kinetics Laboratory, E\"otv\"os Lor\'and University\\
Budapest, P\'azm\'any P. s\'et\'any 1/A., Hungary, H-1117\\
Tel.: +361 463 2314\quad Fax:  +361 463 3172\\
\email{jtoth@math.bme.hu}
\and
A. Weber \at
Institut f\"{u}r Informatik II, Universit\"{a}t Bonn, Bonn, Germany\\
\email{weber@cs.uni-bonn.de}}

\date{Received: date / Accepted: date}

\maketitle

\begin{abstract}
\delete{Previous models of protein synthesis regulation have shown that a high Hill coefficient (exponent)---which defines the cooperativity of transcription factor binding---is of significant importance for a functional circuit. However, , meaning that their Hill coefficient is as low as 1, still, non-linear responses can be achieved by introduction of positive feedback loops.
At present there are no general efficient methods for determining singular points of polynomial or rational systems of ordinary differential equations of high dimension depending on parameters, thus the study of such complex systems is non-trivial.
We anticipate that our computational approach to  studying
steady states  of  polynomial and rational systems of ordinary differential equations  could be used to analyse  models of  various complex chemical and biological networks with a relatively high number of variables.}
Novel mathematical models of three different repressilator topologies are introduced. As designable transcription factors have been shown to bind to DNA non-cooperatively, we have chosen models containing non-co\-op\-er\-a\-tive elements.
The extended topologies involve three additional transcription regulatory elements---which can be easily implemented by synthetic biology---forming positive feedback loops. This increases the number of variables to six, and extends the complexity of the equations in the model.
To  perform our analysis we had to use combinations of modern symbolic algorithms of computer algebra systems \Mathematica\ and \Singular. The study shows that all the three  models have simple dynamics what can also be called regular behaviour: they have a single asymptotically stable steady state with small amplitude oscillations in the 3D case and no oscillation in one of the 6D cases and damping oscillation in the second 6D case.
Using the program \QeHopf\ we were able to exclude the presence of Hopf bifurcation in the 3D system.
\keywords{Repressilator models \and Genetic oscillator \and Steady states \and Computer algebra \and \Mathematica\ \and \Singular \and \QeHopf\ \and Designable repressor}
\end{abstract}
\section{Introduction}\label{sec:intro}
To understand complex biological systems such as tissues and cells, extensive knowledge of molecular interactions and mechanisms is necessary. However, an important part of understanding biological complexity is also mathematical modeling, which allows researchers to investigate connections between cellular processes and to develop hypotheses for the design of new experiments.

Jacob and Monod \cite{jacobmonod} were the first to present a  model of the regulation of the synthesis of a structural protein. In this model enzyme levels are regulated at the level of transcription. Specific proteins are produced which repress the transcription of the DNA to its product (mRNA -- messenger ribonucleic acid), which is  translated into $\beta$-galactosidase,
an enzyme for degradation  of  galactose into simple sugars.

Shortly after Jacob and Monod, Goodwin \cite{goodwin} proposed the first mathematical model of a more complex biological system, a genetic oscillator. The simplest formulation of the Goodwin model involves a single gene that represses its own transcription via a negative feedback loop and uses three variables, $ x, y$  and $z$, where
$x$ denotes the quantity of mRNA, $y$ stands for the quantity of the repressor protein, and $z$ is the quantity of the product, which acts as a corepressor and generates the feedback loop by negative control of mRNA production:
\begin{equation}\label{eq:goodwin}
\deriv{x}=\frac{k_1}{k_2+z^n}-k_3x\quad
\deriv{y}=k_4x-k_5y\quad
\deriv{z}=k_6y-k_7z
\end{equation}
All synthesis and degradation rates in the model (represented by coefficients $k_1$  to $k_7$) are linear, with the exception of the repression, which takes the form of a sigmoidal Hill curve.
Here $ n$ denotes the Hill exponent, which may be interpreted in biological systems as the number of ligand molecules that a receptor can bind. At the level of transcriptional regulation, this can be explained by cooperative binding of the repressor protein to DNA (formation of protein-DNA complexes). It has been demonstrated by Griffith \cite{griffith} that limit cycle oscillations can only be obtained when $n>8$, which is unrealistic in terms of transcriptional regulation, where Hill exponents are rarely higher than 3 or 4.

A repressilator is a network of several genes and can be thought of as an extension of the Goodwin oscillator, which is a one-gene repressilator linked by mutual repression in a cyclic topology. Models of cycles of 2--5 genes have first been studied by Fraser and Tiwari \cite{frasertiwari}, while the first experimental implementation of a 3-gene repressilator in a biological system along with a refined model was demonstrated by Elowitz and Leibler \cite{elowitzleibler}. Let  $X_i$ denote the quantity of mRNA and $Y_i$ the quantity of the repressor protein and
let $ \alpha_{0}, \alpha$  and $ \beta$
represent the transcription rate of a repressed promoter, the maximal transcription rate of a free promoter and the ratio of protein and mRNA decay rate, respectively.
Then the  model is given by the equations:
\be \label{eq:leoleib}
\begin{aligned}
\deriv{X_{i}}=& \alpha_0 + \frac \alpha {1 + Y_{i-1}^n} - X_{i}\\
\deriv{Y_{i}}=& -\beta(Y_{i}-X_{i}) \quad(i=1,2,3),
\end{aligned}
\ee
where the indices 0 and 3 are identified. (Let us note that
Elowitz and Leibler write $j$ instead of $i-1$, still they speak about 6 equations.
However, if $i$ and $j$ run independently, then we have $3\times 3+3=12$ equations.
Our modification is also in accordance with our model below.)
In the paper mentioned above Elowitz and Leibler also determine the unique positive stationary point, and  the parameters when the stationary point looses its stability.
They map part of the parameter space,  and  find oscillations \emph{numerically}.
In the Goodwin model, undamped oscillations can only occur when repression is accomplished by the co-repressor $Z$ and never directly by the protein $Y$  \cite{griffith}, probably due to the increased time delay. In the cyclic repressilator by Elowitz and Leibler, oscillations can occur without co-repressors and for Hill exponents  $n$ as low as 2, which is more applicable to biological systems. It also takes into consideration the production of mRNA with a constant rate.

A theoretical solution for the introduction of non-linearity to non-co\-op\-er\-a\-tive biological systems by using transcription factors, where the same proteins are able to repress one gene and activate another gene has been proposed by M\"uller et al. \cite{mullerhofbauerendlerflammwidderschuster} and Widder et al. \cite{widdermaciasole}. Tyler et al. \cite{tylershiuwalton} continue the work by \cite{mullerhofbauerendlerflammwidderschuster} with biologically less restrictive assumptions.
However, such transcription factors are extremely rare in nature and would also be hard to design by directed evolution.
Recently, Lebar et al. \cite{lebarbezeljakgolobjeralakaduncpirsstrazarvuckozupancicbencinaforstnericgaberlonzaricmajerleoblaksmolejerala} have shown that non-linearity can be introduced into a biological system, by introduction of non-cooperative repressors in combination with activators, competing for binding to the same DNA sequence, thus creating a positive feedback loop. In principle, positive feedback loops could be introduced---based on the same DNA binding domain---to build functional repressilator circuits, consisting of non-cooperative repressors.

The above described oscillator circuit was experimentally constructed using three natural repressor proteins, the TetR, LacI and CI repressors. However, construction of functional biological circuits using such natural repressors requires fine-tuning due to their diverse biochemical properties. Furthermore, the low number of well-characterized natural repressor proteins does not enable construction of multiple circuits in a single cell, a fact that may
support the use of stochastic models, cf. e.g. \cite{aranyitoth,erdilente,tothnagypapp}. With the developments in the field of synthetic biology in the recent years, the use of designable repressors has become more and more frequent
\cite{qilarsongilbertdoudnaweissmanarkinlim,%
kianibealebrahimkhanihuhhallxieliweiss,%
lohmuellerarmelsilve,%
garglohmuellersilverarmel,%
congzhoukuocunniffzhang,%
gaberlebarmajerlesterdobnikarbencinajerala,%
lebarbezeljakgolobjeralakaduncpirsstrazarvuckozupancicbencinaforstnericgaberlonzaricmajerleoblaksmolejerala,%
lebarjerala,nissimprtlifridkinperezpineralu}. Such repressors can be designed to bind any DNA sequence due to their modular structure, which can be exploited to eliminate interactions with the cells' genome. Furthermore, they can be designed in almost unlimited numbers and the biochemical properties of individual repressors are very similar, making construction and modeling of synthetic circuits easier. However, the main disadvantage of designable repressors is that they are monomeric, meaning that their binding to DNA is non-cooperative and the Hill exponent $n$  is equal to 1. Under those conditions, the above described models are not expected to produce oscillations. This poses a challenge of introducing non-linearity in complex biological systems, consisting of such repressors.

Equations describing the model of the repressilator by Elowitz and Leibler with only two variables are easy to handle. However, addition of activators to the model increases the number of variables, thus expanding the complexity of the model. Mathematical analysis of systems of equations with a large number of variables is harder,
and can be investigated using deterministic approach based on ordinary differential equations (ODEs) with kinetics which can be either of the mass action type or other, and use the qualitative theory of ordinary differential equations to find bistability, oscillation etc., or calculating solutions numerically. The stochastic description \cite[Chapter 5]{erditoth},\cite[Chapter 10]{tothnagypapp} or \cite{erdilente} usually does not allow to make symbolic calculations because of the complexity of the model. However, in this case one also may turn to the computer to do simulations \cite{sipostotherdi,nagypapptoth}.

In this work, we compare deterministic mathematical models of three different repressilator topologies based on non-cooperative repressors, which can be implemented in biological systems based on designed DNA binding domains such as zinc fingers, TALEs or dCas9/CRISPR fused to activation or repression domains. The models are simplified and consider reactions only at the protein level. The concentration of each repressor and activator over time is described in a separate equation in a system of equations. In the 3D model, we perform the singular point analysis of
 the 3-variable equation system for the basic repressilator topology, consisting of 3 repressors.  In the 6D models we expand the complexity by addition of 3 variables, representing activators.
The study of the system is non-trivial since there are no efficient methods
for determining singular points of polynomial or rational systems
of ODEs of high dimension and depending on parameters. We perform
our analysis using the combinations of  modern  symbolic   algorithms of computer
algebra systems \Mathematica\ \cite{mathematica}  and \Singular\ \cite{deckergreuelpfisterschonemann}, which has not yet been covered in the literature and represents a novel approach in analysis of biological circuits.

Extensive theoretical studies have already been done on the 3D repressilator circuit.
\cite{kuznetsovafraimovich} only treat the special case $\alpha_0=0$ of our model.
In a nonlinear model such a seemingly slight difference may cause qualitative differences.
They also treat saturable degradation, i.e. cases when instead of
$-k_{\mathrm{deg}}x$ one has a term $\frac{-k_{\mathrm{deg}}x}{1+x}.$
They have shown the connection between the evolution of the oscillatory solution and formation of a heteroclinic cycle at infinity.
\cite{dilao} also deals with the $\alpha_0=0$ case, but he derives the usual nonlinear term starting from a mass action model, and using the Michaelis--Menten type approximation.
That author is mainly interested in models with delay.
\cite{guantespoyatos} again assumes $\alpha_0=0,$ and the rational functions are such that both the denominator and the numerator are second degree polynomials. The paper contains no general mathematical statements, only numerical simulations. On the other hand, the mathematically correct paper \cite{mullerhofbauerendlerflammwidderschuster} treats a large class of models including the model by Elowitz and Leibler (but not our models) and give a detailed description of the attractors. Summarizing, none of the models in the literature cover the classes of models we are interested in, and also, the present approach seems to be a novel one from the mathematical point of view and uses models based on recent experiments in synthetic biology.

Note also that \cite{tiggesmarquezlagostellingfussenegger} consider a much more complicated
process, no formulae can be found in the paper itself.
However, its Supplement contains models, delay, stochastic effects, and no qualitative analysis at all. They estimate the parameters of the model.
\cite{thieffrythomas} use the heuristic ideas (\textit{kinetic logic}) of Thomas without a
mathematical treatment.
\section{A 3D model}\label{sec:3D}
First we model the basic repressilator circuit based on non-cooperative repressors, similar to the Elowitz repressilator. The difference compared to the original repressilator model is that here the Hill exponent $n$  is always equal to 1, due to the non-cooperative nature of the repressors. We consider a symmetrical system, where the biochemical properties of all repressors are similar, as expected with designed transcription factors (and \emph{not} to simplify mathematics). We simplify the system to only consider reactions on the protein level. The variables $x,$ $ y$ and $z$ represent the concentrations of each of the repressors, while the parameters
$ \alpha_{0}, \alpha$  and $k_{\mathrm{deg}}$  represent the rate of protein synthesis when the promoter is repressed, the maximal rate of protein synthesis from the free promoter and protein degradation rate, respectively (Figure \ref{fig:RepTop}).
\begin{figure}[!h]\centering
  \includegraphics[width=84mm]{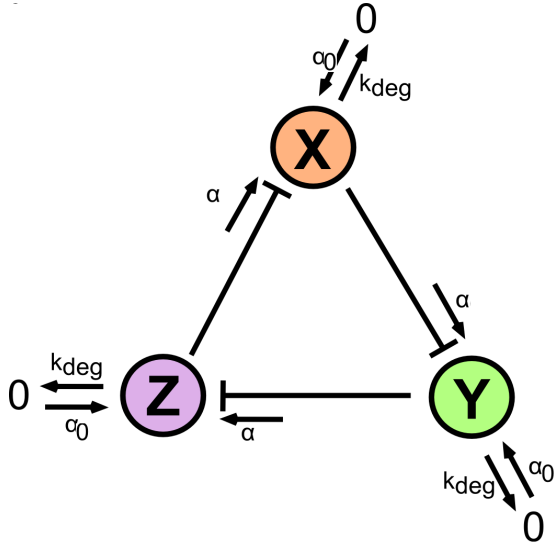}\\
  \caption{The 3D repressilator topology}  \label{fig:RepTop}
\end{figure}
We assume equal rates of synthesis and degradation for all three repressor proteins. Then the concentration of each repressor over time is described by the following equations:
\be \label{ss1}
\begin{aligned}
\frac{d x}{dt}=& \alpha_0 + \frac \alpha  {1 + z} - k_{\mathrm{deg}} x\\
\frac{d y}{dt}=& \alpha_0 + \frac \alpha {1 + x} - k_{\mathrm{deg}} y\\
\frac{d z}{dt}=& \alpha_0 + \frac \alpha {1 + y} - k_{\mathrm{deg}} z,
\end{aligned}
\ee
To simplify the notation we denote
$$
\ s=\alpha_0,\  b=\alpha, \  g=k_{\mathrm{deg}},
$$
where the parameters $s$, $b$ and $g$ are positive real numbers, and the dot denotes the derivative with respect to time.

With  this notation system \eqref{ss1} is written as
\be \label{s1}
\begin{aligned}
\dot x= & s  +\frac b{1 + z} - g x \\
\dot y=& s + \frac b{1 + x} - g y \\
\dot z= & s +\frac {b}{1 + y} - g z.
\end{aligned}
\ee
We are interested  in the  behavior of trajectories of system
\eqref{s1} in the region
$$
D=\{ (x,y,z) :  x>0, \ y>0, \ z>0\}.
$$

System \eqref{s1} has two singular points whose coordinates
contain the expression
$u=\sqrt{4 b g + (g + s)^2}.$ With this we have
\be\label{eq:bandineq}
b= \frac{u^2-(g + s)^2}{4 g}{\quad{\rm  and}\quad u>g+s.}
\ee
Then the steady states  of the system are
$$ A=(x_0,y_0,z_0)=\left(\frac{s- g -  u}{  2 g}, \frac{s- g -  u}{  2 g}, \frac{s- g -  u}{  2 g}\right)
$$
and
$$
B=(x_1,y_1,z_1)=\left( \frac{s+ u-g}{  2 g},\frac{s+ u-g}{  2 g},\frac{s+ u-g}{  2 g}
\right).
$$
The eigenvalues of the Jacobian matrix of system \eqref{s1} at $A$ are
$$
\kappa_1=\frac{2 g u}{g+s-u}, \quad \kappa_{2,3}=-\frac{g (3 g+3 s-u)}{2 (g+s-u)}\pm i \frac{\sqrt{3} g (g+s+u)}{2 (g+s-u)}
$$
and the eigenvalues at $B$ are given by
\be \label{lam}
\lambda_1=
-\frac{2 g u}{g+s+u},\quad \lambda_{2,3}= -\frac{g (3 g+3 s+u)}{2 (g+s+u)}
\pm i \frac{\sqrt{3} g \left( g+s-u\right) }{2 (g+s+u)}.
\ee

We can expect chemically relevant  non-trivial  behavior of trajectories in the domain $D$
if both singular points of the system are located in $D$.
The necessary and sufficient  condition
for this is
\be \label{sem1}
x_0>   0, \  x_1 >  0.
\ee
\delete{
However, $x_0> 0$ alone implies that $s>u+g,$
and this together with $u-g>s$ (see \eqref{eq:bandineq})
shows that these inequalities cannot be fulfilled simultaneously,
thus $A\in D$ can never occur.
Similarly, $x_1<0$ implies $u<g-s,$
and this together with $u>g+s$ (see \eqref{eq:bandineq})
shows that these inequalities cannot be fulfilled simultaneously,
thus $B\in -D$ can never occur.
The only possibility left is that $x_0<0$ and $x_1>0,$
which happens if and only if $s+g<u,$
(which is always the case)
because this inequality automatically implies $s-g<u$ and $g-s<u.$
}
From $u=\sqrt{4bg + (g+s)^2}$, one gets both $u>g$ and $u>s$ (since $b,g$ and $s$ are positive).
As a consequence, $s-g-u = (s-u) - g < 0$ since $s<u.$ Thus, $A$ can be discarded.
Moreover, $s+u-g = s + (u-g) > 0$ since $u>g,$ so $B$ is always in the domain $D$.
Thus, in this case $B$ is in $D$ and $A$ has negative coordinates.
For the eigenvalues \eqref{lam} of the matrix of the linear approximation
of \eqref{s1} at $B$ we have
 $\lambda_1<0$, $\mathrm{Re}\, \lambda_{2,3}<0$, that is,
$B$ is asymptotically stable.

To conclude, in the domain $D=\{ (x,y,z) : x>0, \ y>0, \ z>0\}$ the system can have
only one steady state (point $B$), which is a (locally) asymptotically stable attractor and the trajectories (exponentially) fast approach a neighborhood of the steady state.
In a small neighborhood of it there are damping oscillations, however the amplitude of
oscillations is very small. Why? Because
to obtain oscillations with a large amplitude we need to have at the point $B$ in
$D$ the eigenvalues with $\mathrm{Abs(Re}\, \lambda_{2,3})$ small and $\mathrm{Abs(Im}\, \lambda_{2,3})$
large. However, it can be shown easily that $\mathrm{Abs(Re}\, \lambda_{2,3})<\mathrm{Abs(Im}\, \lambda_{2,3})$ cannot occur.
Thus, this is difficult to achieve in
our system, while it would probably be facilitated in the system with high Hill exponent $n$.
In Fig. \ref{fig:damposc3d} we have chosen the parameters so as to make the difference between
$\mathrm{Abs(Re}\, \lambda_{2,3})$ and $\mathrm{Abs(Im}\, \lambda_{2,3})$ as small as possible.
Fig. \ref{fig:damposc3d} shows the behaviour of the model for a single, specific set of the parameters, but the argument above is symbolic, i.e. valid for all sets of the parameters.
\begin{figure}[!ht]\centering
  \includegraphics[width=84mm]{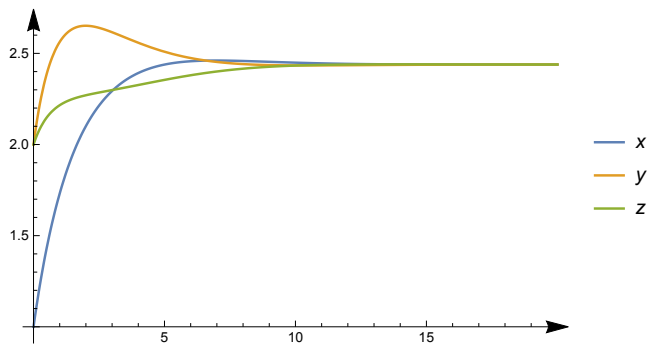}\\
  \caption{ Damping oscillation (overshoot) in the 3D model. $s=0.3, b=4, g=0.6,$ initial concentrations: $(1,2,2).$}
  \label{fig:damposc3d}
\end{figure}

Our calculations above provided an alternative proof of a part of the statement by Allwright \cite{allwright} who has obtained stronger results: he has shown for a class of more general class of models including our one the existence, uniqueness and \emph{global} asymptotic stability of the stationary point.
In order to apply Allwright's results to our model one has to calculate a few quantities, this we will do in the Appendix \ref{subsec:D}.
\section{The forward feedback repressilator 6D model}
By a similar principle that was demonstrated to introduce a non-linear response into a non-cooperative system \cite{lebarbezeljakgolobjeralakaduncpirsstrazarvuckozupancicbencinaforstnericgaberlonzaricmajerleoblaksmolejerala}, we devise a more complex repressilator topology (Figure \ref{fig:doscill2}).
The new system consists of the same repressor topology as the 3D model, but also includes three transcriptional activators, binding to the same DNA targets as the repressors. Each of the activators drives the synthesis of itself and of the next repressor in the cycle. This topology can be implemented in biological systems using a set of three DNA binding domains (X, Y, Z), their combination with an activator (a) or a repressor (r) domain and appropriate binding sites within the three operons.
\begin{figure}[!ht]\centering
\includegraphics[width=84mm]{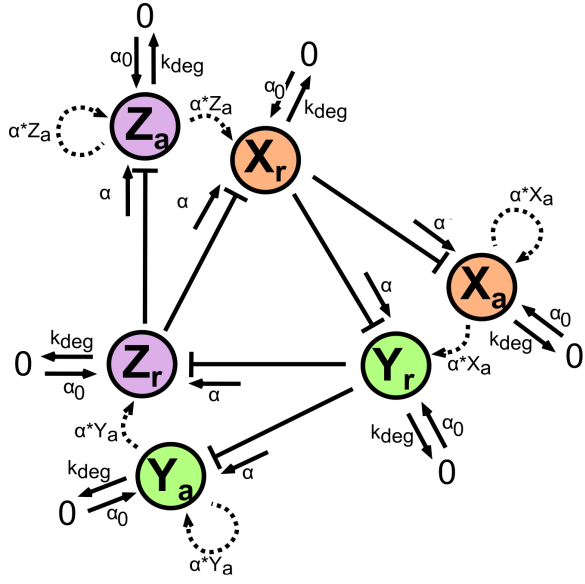}\\
  \caption{A repressilator topology, involving activators, driving the synthesis of the next repressor in the cycle.}
  \label{fig:doscill2}
\end{figure}
The new topology therefore includes 6 variables: the concentration---denoted by the corresponding lowercase letters---of the
 3 repressors ($X_{r}, Y_{r}$ and $Z_{r}$) and 3 activators ($X_{a}, Y_{a}$ and $ Z_{a}$).
The Hill exponent $n$  is always equal to 1, the parameters $\alpha_{0}, \alpha$ and $k_{\mathrm{deg}}$  represent the rate of protein synthesis when the promoter is repressed, the rate of protein synthesis from the free promoter and protein degradation rate, respectively. We assume equal rates of synthesis and degradation for all repressor and activator proteins. In this case, the protein synthesis rate is considered maximal when the activator is bound to the promoter, so concentration of repressors and activators over time is given as:
\be \label{s6o}
\begin{aligned}
 \frac{dx_r}{dt} = & \alpha_0 + \alpha z_a/(1 + z_r + z_a ) -  k_{\mathrm{deg}}   x_r, \\
\frac{dz_a}{dt}  = & \alpha_0 + \alpha z_a/(1 + z_r + z_a ) -  k_{\mathrm{deg}}  z_a,\\
\frac {dy_r }{dt}  = & \alpha_0 + \alpha x_a/(1 + x_r + x_a) -  k_{\mathrm{deg}} y_r \\
\frac{dx_a}{dt}  = & \alpha_0 + \alpha x_a/(1 + x_r + x_a) -  k_{\mathrm{deg}}  x_a, \\
\frac {dz_r }{dt} = & \alpha_0 + \alpha y_a/(1 + y_r + y_a) -  k_{\mathrm{deg}}  z_r,\\
\frac{ dy_a } {dt} = & \alpha_0 + \alpha y_a/(1 + y_r + y_a) -  k_{\mathrm{deg}}  y_a.
\end{aligned}
\ee

Introducing the notation
$$
x_1=x_{r},\ x_2=
z_{a}, x_3= y_{r}, \ x_4= x_{a},\ x_5= z_{r}, \ x_6= y_{a},
\ s= \alpha_{0}, \ b=\alpha, \ g = k_{\mathrm{deg}}
$$
where $b, g, s >0$ we rewrite system \eqref{s6o} in the form
\be \label{s6}
\begin{aligned}
\dot x_1= &s - g x_1 + \frac{b x_2}{1 + x_2 + x_5}=X(x_1,x_2,x_5)
\\
\dot x_2= & s - g x_2 + \frac {b x_2}{1 + x_2 + x_5}=X(x_2,x_2,x_5)
\\
\dot x_3= & s - g x_3 + \frac{b x_4}{1 + x_1 + x_4}=X(x_3,x_4,x_1)
\\
\dot x_4=&s - g x_4 + \frac{b x_4}{1 + x_1 + x_4}=X(x_4,x_4,x_1)
\\
\dot x_5=& s - g x_5 + \frac{b x_6}{1 + x_3 + x_6}=X(x_5,x_6,x_3)
\\
\dot x_6= & s - g x_6 + \frac{b x_6}{1 + x_3 + x_6}=X(x_6,x_6,x_3)
\end{aligned}
\ee
with
\be\label{X}
X(u,v,w):=s-gu+\frac{bv}{1+v+w}.
\ee

From the first two equations of \eqref{s6} we obtain that for any
steady state $(x_1,x_2,\dots, x_6)$ of the system it should be
 that $x_1=x_2$.  Similarly, two other pairs of equations
\eqref{s6} yield
 that $x_3=x_4, x_5=x_6.$ Thus, the  simplified stationary point
 equations  are:
\begin{eqnarray}
0&= &s - g x_1 + \frac{b x_1}{1 + x_1 + x_5}=X(x_1,x_1,x_5)\label{eq:st1}
\\
0&= & s - g x_3 + \frac{b x_3}{1 + x_1 + x_3}=X(x_3,x_3,x_1)\label{eq:st2}
\\
0&=& s - g x_5 + \frac{b x_5}{1 + x_3 + x_5}=X(x_5,x_5,x_3).\label{eq:st3}
\end{eqnarray}
We first  look for  steady states of system \eqref{s6}
using the  routine \texttt{Solve} of \Mathematica\ and
we find 8 steady states.
Two of them are
\be \label{F}
F=(f,f,f,f,f,f), \quad {\rm where \quad } f= \frac{\sqrt{(b-g+2 s)^2+8 g s}+b-g+2 s}{4
   g}
 \ee
and
\be
\label{H}
H=(h,h,h,h,h,h), \quad {\rm where \quad } h  = -\frac{\sqrt{(b-g+2 s)^2+8 g s}-b+g-2 s}{4
   g}.
  \ee
 However, coordinates of the other steady states are given by long cumbersome expressions which are not convenient to analyse. (If one applies \texttt{Simplify} or even \texttt{FullSimplify} the result of \texttt{LeafCount} is more than thirteen thousand.) Thus, we choose another approach to finish.

Chemically relevant steady states should satisfy the conditions
\begin{multline} \label{6semi}
X(x_1,x_1,x_5)= X(x_3,x_3,x_1)=X(x_5,x_5,x_3)=0,\\ s>0,\ g>0,\
 b>0,\ x_1>0,\ x_3>0, \ x_5>0 .
\end{multline}
System \eqref{6semi} is  a so-called semi-algebraic system
(since it contains not only algebraic equations $X(x_1,x_1,x_5)= X(x_3,x_3,x_1)=X(x_5,x_5,x_3)=0$, but also inequalities).
Nowadays powerful algorithms to solve such systems have been developed
and implemented in many computer algebra systems.
In particular, in  \Mathematica\
the routine
  \texttt{ Reduce }  can be applied to finding solutions  of
  semi-algebraic systems.
For algebraic functions
\texttt{Reduce}  constructs equivalent purely polynomial systems
and then  uses cylindrical algebraic decomposition (CAD) introduced by Collins in \cite{collins}    for real domains and Gr\"{o}bner basis methods for complex domains.

To simplify computations  we first clear
the denominators on the right hand side of \eqref{eq:st1}--\eqref{eq:st3} obtaining the polynomials
\begin{eqnarray*}
  f_1&:=&s + b x_1 - g x_1 + s x_1 - g x_1^2 + s x_5 - g x_1 x_5 \\
  f_1&:=&s + s x_1 + b x_3 - g x_3 + s x_3 - g x_1 x_3 - g x_3^2 \\
  f_1&:=&s + s x_3 + b x_5 - g x_5 + s x_5 - g x_3 x_5 - g x_5^2 0
\end{eqnarray*}
Solving with \texttt{Reduce} of \Mathematica\
  the semi-algebraic system
\begin{equation}
f_1 =  f_3 =  f_5 = 0,  x_1 > 0,
    x_3 > 0,  x_5 > 0,  s > 0,  g > 0,  b > 0,\label{semiin}
\end{equation}
with respect to $
 x_1, x_3, x_5, s, b,
   g$
we obtain the solution
\begin{equation}
 x_1 > 0,  x_1 =x_3 = x_5,   s > 0, b > 0,
 g =\frac s{x_1} +\frac { b}{1 + x_1 + x_5}.\label{semiout}
\end{equation}
The input command and the output are given in Appendix \ref{subsec:B}. The exact result may slightly differ depending on the version you use, but nevertheless, it always implies the essential relation that $x_1 =x_3 = x_5.$

Solving the last equation for $x_1$ we obtain two solutions:
$$
\frac{\sqrt{(b-g+2 s)^2+8 g s}+b-g+2 s}{4
   g}
\mathrm{ and }
\frac{-\sqrt{(b-g+2 s)^2+8 g s}+b-g+2 s}{4
   g}.
$$
However in the second case $x_1$ is negative, so the only
steady state whose coordinates satisfy \eqref{6semi} is the point
$F$ defined by \eqref{F}.

Computing the eigenvalues of the Jacobian matrix of system
\eqref{s6} at  $F$ we find that they are
\begin{eqnarray*}
\kappa_{1,2,3}&=&-g, \quad \kappa_4 = -g+\frac{b}{(1 + 2 f)^2} \\
\kappa_{5,6}&=&-g+\frac{b(2 + 3f)}{2 (1 + 2 f)^2}\pm i \frac{\sqrt{3} b f}{2 (1 + 2 f)^2},
\end{eqnarray*}
where $f$ is defined by \eqref{F}.
A short calculation shows that all eigenvalues of the Jacobian matrix have negative real parts yielding
that  $F$ is asymptotically  stable.
Thus, we have proven the following result.
\begin{theorem}
Point $F$ is the only positive stationary point of system \eqref{s6}
and it is asymptotically stable.
\end{theorem}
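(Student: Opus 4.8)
The plan is to prove the two assertions of the theorem separately: first that $F$ is the \emph{unique} stationary point of \eqref{s6} lying in the positive orthant, and then that the Jacobian of \eqref{s6} at $F$ has all eigenvalues with negative real part.

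For uniqueness I would begin by exploiting the internal symmetry of \eqref{s6}. Setting the first two right-hand sides equal to zero and subtracting them kills the shared Hill term and leaves $g x_1 = g x_2$, hence $x_1 = x_2$; the other two pairs of equations give $x_3 = x_4$ and $x_5 = x_6$ in the same way. Any steady state is therefore determined by $(x_1,x_3,x_5)$ solving \eqref{eq:st1}--\eqref{eq:st3}. Clearing denominators turns these into three polynomial equations $f_1 = f_3 = f_5 = 0$, and the chemically relevant steady states are exactly the solutions of the semi-algebraic system \eqref{semiin}. Rather than attempt to write down the explicit roots --- which, as noted, are so large that \texttt{LeafCount} exceeds ten thousand even after \texttt{FullSimplify} --- I would hand \eqref{semiin} to a real quantifier-elimination engine based on cylindrical algebraic decomposition, namely \texttt{Reduce} of \Mathematica, and record its output \eqref{semiout}. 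Its essential content is that $x_1 = x_3 = x_5$ together with $g = s/x_1 + b/(1+x_1+x_5)$; solving this last scalar equation for $x_1$ gives two roots, one negative and therefore discarded, and one equal to $f$ from \eqref{F}. This produces $F$ and shows there is no other positive steady state.

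For asymptotic stability I would compute the Jacobian of \eqref{s6} at $F$ and its eigenvalues; thanks to the block-cyclic structure they factor as $\kappa_{1,2,3} = -g$, $\kappa_4 = -g + b/(1+2f)^2$, and $\kappa_{5,6} = -g + \frac{b(2+3f)}{2(1+2f)^2} \pm i\frac{\sqrt{3}\,bf}{2(1+2f)^2}$. It then remains to check $\kappa_4 < 0$ and $\mathrm{Re}\,\kappa_{5,6} < 0$. The key simplification is to feed back the stationarity relation at $F$, namely $g f = s + \frac{bf}{1+2f}$, i.e. $g = \frac{s}{f} + \frac{b}{1+2f}$; since $s,f > 0$ this forces $g(1+2f) > b$, and because $1 + 2f > 1$ we get $b < g(1+2f) < g(1+2f)^2$, i.e. $\kappa_4 < 0$. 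For the complex pair, substituting $b = g(1+2f) - \frac{s(1+2f)}{f}$ into $2g(1+2f)^2 - b(2+3f)$ and using $2(1+2f) - (2+3f) = f$ reduces it to $g f (1+2f) + \frac{s(1+2f)(2+3f)}{f}$, which is manifestly positive; hence $\mathrm{Re}\,\kappa_{5,6} < 0$. All six eigenvalues thus have negative real part and $F$ is asymptotically stable.

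The step I expect to be the real obstacle is the uniqueness claim. The ordinary \texttt{Solve} output is useless for sign analysis because of the size of the remaining roots, so the argument leans entirely on trusting the CAD-based solver to return a \emph{complete} and \emph{correct} simplification \eqref{semiout}; moreover the exact shape of that output is known to vary between versions. I would therefore double-check it independently --- for instance by verifying directly that \eqref{semiout} satisfies \eqref{semiin}, and by re-deriving the reduction to $x_1 = x_3 = x_5$ with Gr\"obner-basis tools in \Singular. Once that reduction is secure, the single positive root and the two eigenvalue sign estimates are elementary.
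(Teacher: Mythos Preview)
Your proposal is correct and follows essentially the same route as the paper: the reduction $x_1=x_2$, $x_3=x_4$, $x_5=x_6$, the use of \texttt{Reduce} on the semi-algebraic system \eqref{semiin} to force $x_1=x_3=x_5$, and the explicit eigenvalue formulas are exactly what the paper does. Where the paper merely writes ``a short calculation shows that all eigenvalues \ldots\ have negative real parts'', you actually supply that calculation via the stationarity relation $g=\tfrac{s}{f}+\tfrac{b}{1+2f}$, so your write-up is in fact slightly more detailed than the paper's at that point.
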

\section{The backward feedback repressilator 6D model}
Due to the absence of oscillations in the above described model we next consider a repressilator topology with activators wired to activate transcription of the previous repressor in the cycle (Figure 4). The notations of the variables and the constants are the same as in the previous 6D model. Therefore, the concentrations of repressors and activators over time are as follows:
\be \label{s6sec}
\begin{aligned}
\dot x_1= & s - g x_1 + b x_4/(1 + x_4 + x_5)
=X(x_1,x_4,x_5)
\\
\dot x_2= & s - g x_2 + b x_4/(1 + x_4 + x_5)
=X(x_2,x_4,x_5)
\\
\dot x_3= & s - g x_3 + b x_6/(1 + x_1 + x_6)
=X(x_3,x_6,x_1)
\\
\dot x_4=&s - g x_4 + b x_6/(1 + x_1 + x_6)
=X(x_4,x_6,x_1)
\\
\dot x_5=& s  - g x_5 + b x_2/(1 + x_2 + x_3)
=X(x_5,x_2,x_3)
\\
\dot x_6= & s  - g x_6 + b x_2/(1 + x_2 + x_3)
=X(x_6,x_2,x_3)
\end{aligned}
\ee
with $X(u,v,w):=s-gu+\frac{bv}{1+v+w}$,
that is, $X$ is again defined by \eqref{X}, but the right-hand-sides do depend on three variables, differently form the previous case.
\begin{figure}\centering
  \includegraphics[width=154mm]{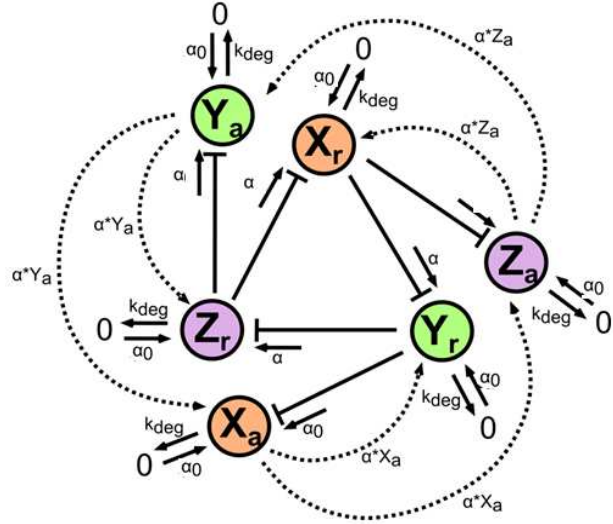}\\
  \caption{The backward feedback repressilator 6D model: repressilator topology including transcriptional activators, driving the synthesis of the previous repressor in the cycle.}
  \label{fig:doscill1}
\end{figure}
\subsection{Steady states of the model}
From \eqref{s6sec} it is easily seen that any stationary point
of \eqref{s6sec} should fulfil
 $x_2=x_1, x_4=x_3, x_6=x_5$. Then,  similarly as in the case of system \eqref{s6}, computing
with \Mathematica\ we find that the system has singular points $F$ and $H$
defined by \eqref{F} and \eqref{H} and  a few other singular
points whose coordinates are given by cumbersome expressions,
which are not suitable for further analysis.
Therefore, again we proceed using the previous ideas.

The chemically relevant
 steady states of system \eqref{s6sec} are solutions to the semi-algebraic system
\be \label{6sec_semi}
f_1=f_2 =f_3=0,\ s>0,\ g>0,\ b>0,\ x_1>0, x_3>0, x_5>0
\ee
where
$$
\begin{aligned}
f_1= &  s - g x_1 + b x_3 + s x_3 - g x_1 x_3 + s x_5 -
 g x_1 x_5 ,\\
f_2= &   s + s x_1 - g x_3 - g x_1 x_3 + b x_5 + s x_5 -
 g x_3 x_5  ,  \\
f_3= &  s + b x_1 + s x_1 + s x_3 - g x_5 - g x_1 x_5 -
 g x_3 x_5
\end{aligned}
$$
(that is, $f_1= X(x_1,x_3,x_5   )(1 + x_3 + x_5),
f_2 = X(x_3,x_5,x_1) (1 + x_1 + x_5), \ f_3  =X(x_5,x_1,x_3) (1 + x_1 + x_3)
$).
But unlike the case of the previous model, we were
able to solve system \eqref{6sec_semi} neither  with \texttt{Reduce} nor \texttt{Solve}
of \Mathematica. (\texttt{Solve} provides five roots, most of them in uselessly complicated form.)
It appears that the reason is
that in the previous model   the steady states were determined
 from   the  system
$$
X(x_1,x_1,x_5)= X(x_3,x_3,x_1)=X(x_5,x_5,x_3)=0,
$$
where each equation depended only on two variables,
whereas in the present case they are to be determined from the system
$$
X(x_1,x_3,x_5)=X(x_3,x_5,x_1)=X(x_5,x_1,x_3)=0,
$$
where each equation depends on three variables,
so the latter system is more complicated.


To find the steady states of system \eqref{s6sec} we
use the  computer algebra system \Singular\ \cite{deckerlaplagnepfisterschonemann,deckergreuelpfisterschonemann}.
We  look for solutions of system
\be\label{fss}
f_1(x_1,x_3,x_5,s,g,b)=  f_2(x_1,x_3,x_5,s,g,b)=f_3(x_1,x_3,x_5,s,g,b)=0.
\ee

The polynomials $f_1,f_2,f_3$ are polynomials of six variables
with rational coefficients, that is, they are polynomials
of the ring $\Q[ s,b,g,x_1,x_3,x_5]$. In \Singular\ the ring of such
polynomials can be  declared as

 \centerline{
 \texttt{ring r=0,(s,b,g,x1,x3,x5),(lp)}),}
\noindent  where \texttt{r} is the name of the ring,  $0$ is the characteristic
of the field of rational numbers $\Q$,  and  \texttt{lp}
 means that Gr\"{o}bner basis calculations should
 be performed using the lexicographic ordering.

Let $I$ be the ideal generated by $f_1,f_2,f_3$
in  $\Q[ s,b,g,x_1,x_3,x_5]$, that is,
\be \label{I}
I=\la f_1,f_2, f_3 \ra.
\ee
The set of solutions of system \eqref{fss} is the variety $V(I)$
of $I$ (the zero set of all polynomials from $I$).
(We give definitions and some facts about polynomial ideals and their varieties in Appendix \ref{subsec:A}.)
 Then, applying the routine \texttt{minAssGTZ} of
 \cite{deckergreuelpfisterschonemann}, which computes
 minimal associate primes of polynomial ideals using
 the algorithm of \cite{giannitragerzacharias},  we find that the variety of
 $I$ consists of  three  components,
 \begin{equation}
\vv(I)=\vv(I_1)\cup \vv(I_2)\cup \vv(I_3),\label{star}
\end{equation}
where $I_1,I_2,I_3$ are the ideals written under
[1]:, [2]: and [3]:, respectively, in Appendix \ref{subsec:C}.

Since $I_1=\la
   x_3-x_5,
   x_1-x_5,
   2 s x_5+s+b x_5-2 g x_5^2-g x_5\ra
   $
it is easily seen that the variety $\vv(I_1)$ consists
of two points $F$ and $H$ defined by \eqref{F} and \eqref{H}, respectively.
   From the equations for the  third component  we have
   $s=g=b=0$, so the system degenerates.

However, the polynomials defining  the second  component are complicated  and  difficult to  analyse, so we are  unable to extract useful description of the component
from these polynomials.

Fortunately, there is a slightly different way to treat
  the problem of solving  system \eqref{fss}. Namely, we can treat
polynomials
$$  f_1(x_1,x_3,x_5,s,g,b), \
  f_2(x_1,x_3,x_5,s,g,b), \ f_3(x_1,x_3,x_5,s,g,b)
$$
as polynomials of $ x_1, x_2, x_3$ depending on parameters
$s,g,b$ (which is in agreement with the meaning of $s,g,b$ in  differential  system \eqref{s6sec}).

To do so,  we declare the ring as

\centerline{\texttt{ring r=(0,s,b,g),(x1,x3,x5),(lp)},}
\noindent  where \texttt{r} is the name of the ring,  \texttt{(0,s,b,g)}
 means that the computations should be performed in the field
 of characteristic \texttt{0} and \texttt{s,b,g} should be treated as parameters, and,
 as above, \texttt{lp} means that Gr\"{o}bner basis calculations should
 be performed using the lexicographic ordering.

 Computing with \texttt{minAssGTZ} the minimal associate primes of the ideal
 $
J=\la f_1,f_2, f_3 \ra
$ (which looks as $I$ but now   it is  considered as the ideal of the ring
$
  \Q(s,b,g)[x_1,x_3,x_5]
$)
 we obtain that they are
$$
J_1=\la h_1,  h_2,  h_3\rangle
$$
with
\be \label{h123}
\begin{aligned}
h_1=& (2 s g^3+b g^3+g^4) x_5^3+(2 s^2 g^2+2 s b g^2+5 s g^3+2 b^2 g^2+2 b g^3+2 g^4) x_5^2\\
&+(-2 s^3 g-3 s^2 b g-s^2 g^2-3 s b^2 g-2 s b g^2+2 s g^3-b^3 g+g^4) x_5\\
&+(-2 s^4-4 s^3 b-5 s^3 g-5 s^2 b^2-8 s^2 b g-4 s^2 g^2-3 s b^3-7 s b^2 g\\
&-5 s b g^2-s g^3-b^4-2 b^3 g-2 b^2 g^2-b g^3),\\
h_2= &(2 s b g+b^2 g+b g^2) x_3+(-2 s g^2-b g^2-g^3) x_5^2\\
&+(s b g-2 s g^2-b^2 g-g^3) x_5\\
&+(2 s^3+4 s^2 b+3 s^2 g+4 s b^2+6 s b g+s g^2+2 b^3+2 b^2 g+2 b g^2)\\
h_3= &(2 s b g+b^2 g+b g^2) x_1+(2 s g^2+b g^2+g^3) x_5^2\\
&+(s b g+2 s g^2+2 b^2 g+b g^2+g^3) x_5\\
&+(-2 s^3-2 s^2 b-3 s^2 g-2 s b^2-s b g-s g^2)
\end{aligned}
 \ee
 and
$$ J_2=\la
 2 g x_5^2+(g-2 s-b) x_5-s,
   x_1-x_5,
   x_3-x_5\ra.
   $$

 So the variety of the ideal consists of two components
$$
{\bf V}(J)={\bf V}(J_1)\cup {\bf V}(J_2).
$$

   Clearly, the variety ${\bf V}(J_2) $ considered as a  variety in
   $\mathbb{R}^3$  consists of two points
 $F$ and $H$ defined by \eqref{F} and \eqref{H}.

Chemically relevant steady states in the component ${\bf V}(J_1)$  are determined from the
semi-algebraic system
 \be \label{sas6}
 b>0, \ g>0, \ s>0,x_1>0, x_3>0 \ x_5>0,  h_1=0, h_2=0, h_3=0.
 \ee

 Solving  system \eqref{sas6} with  \texttt{Reduce}  we find  that it  has no solution
(the command \texttt{Reduce} returns \texttt{False} as the output).

Using the  analysis performed above  we can prove  the following result.
\begin{theorem}
The only steady state of system \eqref{s6} with positive coordinates  is the  point $F$ defined by \eqref{F}.
\end{theorem}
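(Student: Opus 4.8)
The plan is to turn the algebraic decomposition assembled above into a clean existence-and-uniqueness argument for the positive equilibrium, treating its two relevant components separately.

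First I would isolate the reduction step: in system \eqref{s6sec} the three pairs of equations carrying identical fractional terms force $x_2=x_1$, $x_4=x_3$, $x_6=x_5$ at every steady state, so an equilibrium of the 6D system is nothing but a solution of the three equations in $x_1,x_3,x_5$, equivalently (after clearing denominators) of the polynomial system $f_1=f_2=f_3=0$ from \eqref{fss} with the $f_i$ as in \eqref{6sec_semi}. The chemically relevant equilibria are then exactly the points of $\vv(J)$, $J=\la f_1,f_2,f_3\ra\subset\Q(s,b,g)[x_1,x_3,x_5]$, whose three coordinates are positive for positive $s,b,g$.

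Next I would use the decomposition $\vv(J)=\vv(J_1)\cup\vv(J_2)$ produced by \texttt{minAssGTZ} (minimal associated primes), with $J_1=\la h_1,h_2,h_3\ra$ given by \eqref{h123} and $J_2=\la 2gx_5^2+(g-2s-b)x_5-s,\ x_1-x_5,\ x_3-x_5\ra$. On $\vv(J_2)$ one has $x_1=x_3=x_5$ with $x_5$ a root of the displayed quadratic, and these two roots are precisely the common coordinate of $F$ and of $H$ from \eqref{F}--\eqref{H}; since $\sqrt{(b-g+2s)^2+8gs}>|b-g+2s|$ whenever $s,g,b>0$, the $F$-root is positive and the $H$-root negative, so $\vv(J_2)$ yields $F$ and nothing else. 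It then remains to rule out positive points on $\vv(J_1)$. Here I would point out the triangular structure of \eqref{h123}: $h_1$ is a cubic in $x_5$ alone, while $h_2$ and $h_3$ are affine in $x_3$, resp.\ $x_1$, with leading coefficient $bg(2s+b+g)\ne0$; hence every solution is obtained by choosing a root $x_5$ of $h_1$ and reading off $x_3$ from $h_2$ and $x_1$ from $h_3$. One checks $h_1(0)<0$ and that the leading coefficient of $h_1$ is positive, so positive roots $x_5$ do exist; the obstruction is that for any such $x_5$ the resulting $x_1$ and $x_3$ cannot both be positive.

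Making that last sentence into a proof is the real difficulty: it is a real quantifier-elimination problem in the parameters $s,b,g$ together with $x_5$, the polynomials $h_1,h_2,h_3$ are far too bulky for hand manipulation, and (as already noted) \texttt{Solve} only returns five unwieldy roots. The argument therefore genuinely rests on running \texttt{Reduce} --- cylindrical algebraic decomposition under the hood --- on the semi-algebraic system \eqref{sas6} and obtaining the output \texttt{False}, i.e.\ $\vv(J_1)$ contains no point with $x_1,x_3,x_5>0$. Combining the two component analyses then shows that $F$ from \eqref{F} is the unique positive equilibrium of \eqref{s6sec}, which is the claim. The main obstacle is thus the $\vv(J_1)$ case, and with it the need to rely on termination and correctness of \texttt{minAssGTZ} and of the CAD behind \texttt{Reduce}; the sign bookkeeping on $\vv(J_2)$ is a secondary, entirely elementary point.
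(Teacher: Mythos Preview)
Your outline reproduces the preparatory computations the paper performs \emph{before} stating the theorem, but it omits precisely the step that the paper isolates as the proof. When you write that the chemically relevant equilibria are ``exactly the points of $\vv(J)$, $J=\la f_1,f_2,f_3\ra\subset\Q(s,b,g)[x_1,x_3,x_5]$'', you are treating $s,b,g$ as units of the coefficient field. The decomposition $\vv(J)=\vv(J_1)\cup\vv(J_2)$ delivered by \texttt{minAssGTZ} over that ring is then only a \emph{generic} statement: the ideal-membership witnesses underlying it may carry denominators in $s,b,g$, so it guarantees that for $(s,b,g)$ outside some proper Zariski-closed subset of parameter space the zeros of the $f_i$ coincide with $\vv(J_1)\cup\vv(J_2)$, but that exceptional locus could in principle meet the positive octant. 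Hence even after you correctly locate $F$ as the only positive point of $\vv(J_2)$ and rule out positive points on $\vv(J_1)$ via \texttt{Reduce}, you have not shown that every positive steady state lands in $\vv(J_1)\cup\vv(J_2)$ for \emph{every} positive $(s,b,g)$. (Running \texttt{Reduce} directly on $f_1=f_2=f_3=0$ with positivity would bypass this, but the paper reports that neither \texttt{Reduce} nor \texttt{Solve} could handle that system --- which is why the detour through $J_1,J_2$ was taken in the first place.)

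The paper's proof closes exactly this gap. It also computes the minimal associated primes of $I=\la f_1,f_2,f_3\ra$ in the full polynomial ring $\Q[s,b,g,x_1,x_3,x_5]$, obtaining three components $\vv(I_1),\vv(I_2),\vv(I_3)$ valid with no genericity assumption on the parameters, and then verifies by a Gr\"obner reduction in \Singular\ that $\la h_1,h_2,h_3\ra\subset I_2$, whence $\vv(I_2)\subset\vv(J_1)$; together with $\vv(I_1)=\vv(J_2)$ and the trivial inclusion $\vv(I_3)=\{s=b=g=0\}\subset\vv(J_1)$ this yields $\vv(I)\subset\vv(J_1)\cup\vv(J_2)$ as subsets of $\mathbb{C}^6$. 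Only after this bridging step does the combination of your $\vv(J_2)$ sign check and the \texttt{Reduce}$=$\texttt{False} output on \eqref{sas6} actually establish the theorem for all positive parameters.
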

\begin{proof}
As we have shown above the only point from the variety
$\vv (J)$ satisfying the condition
 \be \label{csas6}
 b>0, \ g>0, \ s>0,x_1>0, x_3>0 \ x_5>0
 \ee
 is the point $F$ defined by \eqref{F}.

 However, the  complete set of  steady states of system
 \eqref{s6sec} is determined from the variety $\vv(I)$
 of the ideal $I$  defined by \eqref{I}.
Thus to prove the theorem it is sufficient to show that
  $\vv (I)$ is a subset of  $\vv(J)$.
The first components of $\vv (I)$ and the second component of $\vv(J)$ are the same,
the third component of $\vv (I)$ is the variety $\vv(I_3)$ of the ideal
 $I_3=\la s, b , g\ra$. Obviously,
if
 $s=b=g=0$ then all polynomials $h_1, h_2, h_3$ vanish,
 that means, $\vv (I_3)$ is subset of $\vv (J)$.
    So, we have to compare  the second
 components of the decompositions of  $\vv (I)$ and $\vv(J)$,
 that is, $\vv(H)$ and $\vv(G)$,
 where $H=\la h_1, h_2, h_3 \ra $ with
 $h_1, h_2, h_3$   defined by \eqref{h123} and
 $G=\la g_1, \dots, g_{11}\ra$
 where by $g_1, \dots, g_{11}$ we denote polynomials
 of the second minimal associate prime given in Appendix \ref{subsec:C}.

 First, with the command \texttt{std} of \Singular\
 we compute  Gr\"{o}bner bases of $H$ and $G$, denoting them
 $H_s$ and $G_s$, respectively.
 Then with \texttt{reduce} of \Singular\ we check
 that    $H\subset G$  (since \texttt{reduce($H_s$,$G_s$)}
 returns $0$)  yielding  $\vv(H)\subset \vv(G)$.
$\square$
\end{proof}
 \begin{remark}
Applying the command  \texttt{reduce($G_s,H_s$)} we obtain
  that    $H\subsetneq G$    yielding  $\vv(H)\subset \vv(G)$,
  and  $\vv(H)$ is a strict subset of  $\vv(G)$ (as varieties in $\mathbb{C}^6$).

  We also can find  the precise difference of $\vv(H)$ and $\vv(G)$,
  the set  $\vv(H)\setminus \vv(G)$.
  To this end, we use the fact that
     $$\vv(H)\setminus \vv(G)= \vv(H:G), $$
 where $H:G$ is the quotient of ideals $H$ and $G$ (see e.g. \cite{coxlittleshea} or \cite{romanovskishafer}). In \Singular\
 we compute
 the ideal $H:G$ with the command \texttt{quotient(H,G)}
and then with \texttt{minAssGTZ} we compute the minimal associate primes
of $H:G$ finding that the variety of $H:G$ consists of 5 components:

1) $g=
   s^2+s b+b^2= 0 $

2)
   $b
   =2 s+g=0$

3)
   $3 b-g
 =3 s+2 g=0$

4) $
   b
  =g x_5-s=0$,

5)
   $b
   =g x_5+s+g=0$

Thus,  we see that the varieties $\vv(H)$ and $\vv(G)$ differ only for
the set of parameters which are not relevant for our study: $g=0$ in case 1),
 $b=0$ in cases 2), 4), 5) and in case 3) $s=-2/3 g$ which is impossible
 since $s$ and $g$ are positive.
\end{remark}
\subsection{Stability of the positive steady state}
To study the stability properties of system \eqref{s6sec} near the point $F$ we compute  the characteristic polynomial $p$ of the Jacobian matrix of system
\eqref{s6sec}  at  $F$ and we find that it is given as
\begin{eqnarray*}
  p(u) &=&
\frac{(g + u)^3 }{(1 + 2 f)^6}
   \left(-b+g(1+2f)^2 + u (1+2f)^2\right) \\
   && \left(u^2 (1 + 2 f)^4 + u (1 + 2 f)^2 (b + 2 g (1 + 2 f)^2)\right. \\
   &+& \left.g^2 (1 + 2 f)^4 + b g (1 + 2 f)^2 + b^2 (1 + 3 f + 3 f^2)\right).
\end{eqnarray*}
where $f$ is defined by \eqref{F}. In order to prove that all the roots of the characteristic polynomial have a negative real part it is enough to show that
$-b+g(1+2f)^2>0,$ which can be easily proven, e.g.\ using \texttt{Reduce}.

To sum up, for any $s,b,g >0$ all roots of $p$ have negative real parts.
Therefore, we have proven the following statement.
\begin{theorem}
The only positive steady state $F$ of system \eqref{s6sec} is asymptotically stable.
\end{theorem}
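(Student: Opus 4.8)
The plan is to read off asymptotic stability directly from the factored characteristic polynomial $p$ of the Jacobian of \eqref{s6sec} at $F$, which has already been computed above. Once every root of $p$ is shown to have negative real part, local asymptotic stability of $F$ is immediate from the linearization (Lyapunov / Hartman--Grobman) theorem. So the whole task reduces to a root-location analysis of the three factors of $p$: the cubic factor $(g+u)^3$, the linear factor $-b+g(1+2f)^2+u(1+2f)^2$, and the quadratic factor
\[
 q(u)=(1+2f)^4u^2+(1+2f)^2\bigl(b+2g(1+2f)^2\bigr)u+g^2(1+2f)^4+bg(1+2f)^2+b^2(1+3f+3f^2).
\]

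The first two cases are routine. The factor $(g+u)^3$ contributes only the triple root $u=-g<0$, since $g>0$. For $q$, all three coefficients are strictly positive for $b,g>0$ and $f>0$ (note $1+3f+3f^2>0$ for all real $f$), so by the Routh--Hurwitz criterion in degree two --- equivalently, the product of its two roots is positive and their sum is negative --- both roots of $q$ lie in the open left half-plane, with no restriction on the parameters whatsoever. The linear factor contributes the root $u=\bigl(b-g(1+2f)^2\bigr)/(1+2f)^2$, which is negative precisely when $g(1+2f)^2>b$; this single sign condition is the only genuine point of the proof.

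To verify $g(1+2f)^2>b$ I would argue explicitly rather than merely invoke \texttt{Reduce}. Writing $R=\sqrt{(b-g+2s)^2+8gs}$, formula \eqref{F} gives $1+2f=\dfrac{R+b+g+2s}{2g}$, hence
\[
 4g\bigl(g(1+2f)^2-b\bigr)=(R+b+g+2s)^2-4gb=R^2+2R(b+g+2s)+(b+g+2s)^2-4gb.
\]
Substituting $R^2=(b-g+2s)^2+8gs$ and expanding, the right-hand side collapses to $2(b-g)^2+8s(b+g+s)+2R(b+g+2s)$, which is manifestly positive because $b,g,s>0$. Therefore $g(1+2f)^2>b$, so the last root of $p$ is negative, every root of $p$ has negative real part, and $F$ is asymptotically stable.

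I expect no serious obstacle. The only thing that looks forbidding is the radical defining $f$, but rewriting $1+2f$ in terms of $R$ (or, alternatively, killing the linear-in-$f$ terms via the stationary relation $gf=s+bf/(1+2f)$ that $F$ satisfies) makes the sign of $g(1+2f)^2-b$ transparent. The real content of the theorem has already been carried by the computer-algebra-assisted factorization of $p$ and by the preceding theorem, which identifies $F$ as the unique positive steady state of \eqref{s6sec}.
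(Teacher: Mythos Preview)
Your proof is correct and follows the same line as the paper: factor the characteristic polynomial, observe that the cubic and quadratic factors contribute only roots with negative real part for trivial sign reasons, and reduce everything to the single inequality $g(1+2f)^2>b$. The paper dispatches this inequality by invoking \texttt{Reduce}; you instead give a short explicit computation via $1+2f=(R+b+g+2s)/(2g)$, which is a modest but genuine improvement in that it makes the argument self-contained and independent of computer algebra.
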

We can get a more precise conclusion about the eigenvalues
of $F$. Computing the discriminant of the second degree factor of the above polynomial we find that  it is
$-3 b^2 (1 + 2 f)^6<0,$
which  means that the polynomial $p$ always has a pair of complex conjugate eigenvalues.

Thus, the matrix of the linear approximation of
\eqref{s6sec} at $F$ always has four negative real eigenvalues
and a pair of complex conjugate eigenvalues with negative real parts.
Consequently Hopf bifurcation is not possible in the system.
We can expect to observe strong damping oscillations near the steady
states if the absolute value of the real parts of the complex
eigenvalues are much less than their imaginary parts.
However our numerical experiments  show that the situation appears to be
just the opposite:  the real parts of the complex
eigenvalues are much larger  than their imaginary parts.
So we can observe only oscillations  which quickly goes to the steady state
(see Fig. \ref{fig:7}).

\begin{figure}[!ht]\centering
  \includegraphics[width=84mm]{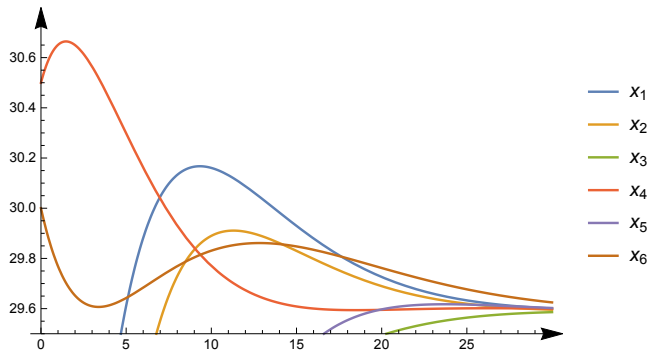}\\
  \caption{Damping oscillation in the 6D model. $s=1, b=10, g=0.2,$ initial concentrations: $(25, 23, 25, 30.5, 21, 30)$}
  \label{fig:7}
\end{figure}

\section{Excluding Hopf Bifurcations by Fully Algorithmic Methods}

We also looked for Hopf bifurcations in the 3D and 6D models using the software
package \QeHopf\ which uses the method of the semi-algebraic
characterization of Hopf bifurcation described  in \cite{elkahouiweber}
(the package is available by request to the authors).
To detect Hopf bifurcation in the models we first generate from the symbolic description of the respective ordinary differential equation a first-order formula
in the language of ordered fields, where our domain is the real numbers.
Specifically,
for a parametrized vector field $f(u,x)$ and the autonomous
ordinary differential system associated with it this semi-algebraic description
 can be expressed by the following first-order
formula:
\begin{eqnarray}
\lefteqn{\exists x (f_{1}(u,x)=0  \logicaland f_{2}(u,x)=0
\logicaland \cdots \logicaland f_{n}(u,x)=0}
\nonumber \\
& &  \logicaland a_{n}>0 \logicaland \Delta_{n-1}(u,x)=0 \logicaland
\Delta_{n-2}(u,x)>0 \logicaland \cdots \logicaland \Delta_{1}(u,x)>0).
\end{eqnarray}
In this formula $a_n$ is $(-1)^{n}$ times the  determinant
of the Jacobian matrix $Df(u,x)$, and $\Delta_{i}(u,x)$ is the $i^{\rm
th}$ Hurwitz determinant of the characteristic polynomial of the
same matrix $Df(u,x)$.
Constraints on parameters are added, and for the rational systems we are considering
one is using the common numerators (adding the condition of non-vanishing denominators).
\QeHopf\ is implemented in \Maple, and the input for the 3D model is as follows:

\begin{footnotesize}
\begin{verbatim}
PP:=diff(x(t),t)= s-g*x(t)+b/(1+z(t)) ;
QQ:=diff(y(t),t)= s-g*y(t) +b/(1+x(t));
RR:=diff(z(t),t)= s-g*z(t)+b/(1+y(t));
fcns:={x(t), y(t) ,z(t)};
params:=[s, g, b];
paramcondlist:=[s>0, g>0, b>0];
funccondlist:=[x(t)>0, y(t)>0, z(t)>0];

DEHopfexistence({PP,QQ,RR}, fcns, params, funccondlist, paramcondlist);
\end{verbatim}
\end{footnotesize}

For the 3D model the generated first-order formula is as follows
\begin{footnotesize}
\begin{verbatim}
informula :=
 ex (vv3,  ex (vv2,  ex (vv1,  ( ( ( 0 < vv1 and  0 < vv2 ) and  0 < vv3 ) and
( ( ( ( ( ( ( s > 0 and b > 0 and g > 0 and
-g*vv1*vv3-g*vv1+s*vv3+b+s = 0 ) and
1+vv3 <> 0 ) and
-g*vv1*vv2-g*vv2+s*vv1+b+s = 0 ) and
1+vv1 <> 0 ) and
-g*vv2*vv3-g*vv3+s*vv2+b+s = 0 ) and
1+vv2 <> 0 ) and
( ( ( 0 < g^3*vv1^2*vv2^2*vv3^2+2*g^3*vv1^2*vv2^2*vv3+2*g^3*vv1^2*vv2*vv3^2
+2*g^3*vv1*vv2^2*vv3^2+g^3*vv1^2*vv2^2
+4*g^3*vv1^2*vv2*vv3+g^3*vv1^2*vv3^2+4*g^3*vv1*vv2^2*vv3
+4*g^3*vv1*vv2*vv3^2+g^3*vv2^2*vv3^2
+2*g^3*vv1^2*vv2+2*g^3*vv1^2*vv3+2*g^3*vv1*vv2^2+8*g^3*vv1*vv2*vv3
+2*g^3*vv1*vv3^2
+2*g^3*vv2^2*vv3+2*g^3*vv2*vv3^2+g^3*vv1^2+4*g^3*vv1*vv2+4*g^3*vv1*vv3
+g^3*vv2^2 +4*g^3*vv2*vv3+g^3*vv3^2+2*g^3*vv1
+2*g^3*vv2+2*g^3*vv3+b^3+g^3 and
0 < (1+vv2)^2*(1+vv3)^2*(1+vv1)^2 ) and
8*g^3*vv1^2*vv2^2*vv3^2+16*g^3*vv1^2*vv2^2*vv3+16*g^3*vv1^2*vv2*vv3^2
+16*g^3*vv1*vv2^2*vv3^2+8*g^3*vv1^2*vv2^2+32*g^3*vv1^2*vv2*vv3
+8*g^3*vv1^2*vv3^2+32*g^3*vv1*vv2^2*vv3+32*g^3*vv1*vv2*vv3^2
+8*g^3*vv2^2*vv3^2+16*g^3*vv1^2*vv2+16*g^3*vv1^2*vv3+16*g^3*vv1*vv2^2
+64*g^3*vv1*vv2*vv3+16*g^3*vv1*vv3^2+16*g^3*vv2^2*vv3+16*g^3*vv2*vv3^2
+8*g^3*vv1^2+32*g^3*vv1*vv2+32*g^3*vv1*vv3+8*g^3*vv2^2
+32*g^3*vv2*vv3+8*g^3*vv3^2
+16*g^3*vv1+16*g^3*vv2+16*g^3*vv3-b^3+8*g^3 = 0 ) and
(1+vv2)^2*(1+vv3)^2*(1+vv1)^2 <> 0 ) ) ) )  )  ) ;
\end{verbatim}
\end{footnotesize}

The system variables became quantified variables and have been renamed to \verb!vv1!, \verb!vv2!, and \verb!vv3!,
and the existential quantification is expressed using the syntax of the package \Redlog\ \cite{dolzmannsturm,sturmredlog},
which had been originally driven by the efficient implementation of quantifier elimination based on virtual substitution methods.
Applying quantifier elimination to the formula yields in principle a quantifier-free semi-algebraic description
of the parameters for which Hopf bifurcation fixed points exist.

If one suspects that there is no Hopf bifurcation fixed point or one just wants to assert that there is one,
then one can apply quantifier elimination to the existential closure of our generated formula.
If all variables and parameters are known to be positive, the technique of positive quantifier elimination
can be used \cite{sturmweberabdelrahmanelkahoui}.
\QeHopf\ uses for the quantifier elimination \Redlog, which  can use \QEPCADB\ \cite{brown}
for formula simplification and as \textit{fallback method}.
However, for the 3D model already \Redlog\ reduces this formula to the equivalent formula \verb!false!,
i.e. for no  parameters
(obeying the positivity condition) a Hopf bifurcation fixed point exists (for positive values).
The needed computation time was less
than 20\,ms.

For the 6D model the fully algorithmic method was not successful, as already the generation
of the formula using Maple failed.
\section{Discussion}
Synthetic biology is one of the most rapidly developing fields of biology. Synthetic genetic circuits are of high interest due to their possible applications in biosensing, bioremediation, diagnostics, therapeutics, etc.
Genetic oscillators are some of the most studied circuits due to their complexity and the possibility of many different topologies.
Building synthetic genetic oscillators with controllable periods and amplitudes would be of great interest to the synthetic biology field as they could for example potentially be used for treatment of diseases related to the circadian cycle.

The experimental validation of complex systems, such as oscillators, can be technically demanding and time consuming. To this day, there has been only few experimental implementations of synthetic oscillators (\cite{elowitzleibler,tiggesmarquezlagostellingfussenegger}). Hence, mathematical modeling of such systems is highly desirable to reduce the experimental workload. Here, we focus on mathematical modeling of 3-cycle genetic repressilators, which have been extensively studied before. However, our study is focused on models based on non-cooperative transcriptional repressors, meaning that all Hill coefficients are always equal to 1. Different studies have already demonstrated that cooperative binding is necessary to obtain oscillations in repressilator systems (\cite{elowitzleibler,bratsunvolfsontsimringhasty,%
mullerhofbauerendlerflammwidderschuster,%
wangjingchen}). Our 3D model confirms that oscillations in such a system are indeed absent. However, a theoretical study by \cite{tsaichoimapomereningtangferrell} has shown that the range of parameters in which the system produces oscillations can be expanded by including positive interactions, facilitated by transcriptional activators. We additionally model two repressilator topologies, involving 3 transcriptional activators, driving transcription of either the next or the previous repressor in the cycle. (Let us mention that Allwright's results cannot be applied for our 6D models.)

What do offer the general results of formal reaction kinetics for the treatment our models? The differential equations of each of the models can be considered as induced kinetic differential equations of a reversible reaction, therefore existence of the positive stationary state follows from general results \cite{boroswrexistence, tothnagypapp}, see the details in \ref{subsec:frk}.

To summarize our mathematical results, we have shown that for all positive values of parameters $b, g, s$ system
\eqref{s1} has a single positive stationary point which is a globally asymptotically stable attractor.
Furthermore, \eqref{s6} and \eqref{s6sec} have a single stationary state (point $F$ defined by \eqref{F}) in
the domain $x_i>0 , \ (i=1,\dots, 6)$,
which is a locally asymptotically stable attractor.

Comparing the 3D and 6D models we see that the properties
of solutions in the domains, where all phase variables are
positive, are similar. For all the three systems in these domains there
is a unique singular point which is a strong attractor.
In the 3D system, a small overshoot is possible near the steady state,
whereas no oscillations appear in the first 6D model near the steady state.

In both 6D  models the steady state is an attractor: 
in both cases all  eigenvalues of the steady state have negative real parts,
however two eigenvalues are always
complex conjugate, so it is possible
to observe damping oscillations near the steady state, see Fig. \ref{fig:7}.
Thus, the 6D models demonstrate richer dynamics than the 3D models, including the possibility of damped oscillations.

We can also note that these  models, as many others  arising
in the studying of biochemical phenomena, exhibit rather simple dynamics.
It was somewhat surprising because the models are given  by systems
of  differential equations depending on few parameters, and there are  systems
which look  simpler, but exhibit rather complicate, even chaotic,
dynamics. It can be a challenging problem to understand the  reasons
for such simple dynamics. One source of argument may originate in the fact
the models' stationary states are so closely related to stationary states of one linkage class reversible reactions as described in \ref{subsec:frk}.

From the biochemical point of view, the probable reason for the absence of oscillations in the first 6D model is the strength of the activator feedback, which forms a negative feedback loop despite the positive interaction. Nevertheless, different combinations of activators and repressors could result in topologies that produce regular oscillations. Due to the stochasticity of biological systems, stochastic modeling and algorithms could be used to further analyze these topologies.

As to the computational methods: they are based on recent mathematical and algorithmic developments, and can be applied to many different similar problems frequently arising in biochemical studies. Note that theory makes it possible to turn to simpler polynomials than those at the beginning, and also that it is not the same to have a six variable polynomial and to have a three variable polynomial with three parameters.
\section{Appendix}\label{sec:appendix}
\subsection{On the nonlinear term}\label{subsec:nonlinear}
The term $\frac{k_1}{k_2+z^n}$ in \eqref{eq:goodwin} is (from the point of calculations) similar to the one obtained when the Michaelis--Menten kinetics is approximated by Tikhonov method, or to the Holling type kinetics which is often used in population dynamics \cite{kisstoth}. Therefore the methods used above may have applications in reaction kinetics and population biology, as well. The main difference between this term and the reaction rates usually used is that although this rate is always positive, it is not zero if $z$ or $x$ is zero, a general requirement quite often assumed, \cite[p. 613]{volperthudjaev}.
\subsection{Solving systems of polynomial equations}\label{subsec:A}
We give a short summary on the topics of solving polynomial systems.
The interested reader may consult \cite{coxlittleshea,romanovskishafer} for more details.

Let $k[x_1, \ldots, x_n]$ denote the ring of polynomials in $n$ indeterminates with coefficients in the field $k$,
which is  typically the set $\R$ of real numbers or $\mathbb{C}$ of complex numbers.

The problem of finding solutions to  a  system of polynomials
\begin{eqnarray}\label{e:poly.sys}
f_1(x_1,\dots,x_n)&=&0,\nonumber\\
\qquad\quad\ \vdots&&\\
f_m(x_1,\dots,x_n)&=&0\nonumber
\end{eqnarray}
is a challenging mathematical problem.
Such systems  often  have infinitely many solutions, and it is simply impossible to find  them all numerically. Even if system (\ref{e:poly.sys})  has a finite number of solutions, it is still very difficult and often impossible to find  all of them
 numerically without applying methods of computational algebra.

In fact, no regular methods for  solving system (\ref{e:poly.sys})  were known until the mid-sixties of the last century
 when Bruno Buchberger \cite{buchberger} invented the theory of Gr\"{o}bner bases, which is now the cornerstone
of modern computational algebra.  We shall recall briefly the notion of a Gr\"{o}bner basis.
Let $I=\la f_1,f_2, \dots, f_s\ra$ denote the ideal  generated by polynomials $f_1(x_1,\dots,x_n)$, $\ldots$,
$f_m(x_1,\dots,x_n)$, that is,
 the set of all sums
$
\{h_1 f_1+h_2 f_2+\dots + h_s f_s\},
$
where $f_k, h_k$ are polynomials.

A Gr\"{o}bner basis of a given ideal $I$ depends on a term ordering of monomials of $k[x_1,\dots, x_n]$. The two most commonly used term
orders are lexicographic order (lex) and degree reverse lexicographic order
(degrev), defined as follows.
Let $\alphab = (\alpha_1, \dots,\ \alpha_n)$ and
$\betab = (\beta_1, \dots, \beta_n)$ be elements of $\np^n$ ($\np=\N\cup 0$).
 We say that
      $\alphab >_{\rm lex} \betab$  with respect to  lexicographic order if and only if, reading from left to right,
      the first nonzero entry in the $n$-tuple
      $\alphab - \betab \in \Z^n$ is positive; we say that
      $\alphab >_{\rm degrev} \betab$
\   with respect to  degree reverse  lexicographic order
if and only if
      $
      |\alphab| = \sum_{j=1}^n \alpha_j > |\betab| = \sum_{j=1}^n \beta_j
      $
        or
      $
      |\alphab| = |\betab| \
      $
and, reading from right to left, the first nonzero entry in the   $n$-tuple
      $\alphab - \betab \in \Z^n$ is negative.
For $\gamma\in \np$ let  $\xb^{\gamma}$ denote the monomial $x_1^{\gamma_1}x_2^{\gamma_2}\cdots
x_n^{\gamma_n}$.
Fixing a term order  on $\kxn$, any  $f \in \kxn$ may be reordered
in the \emph{standard form} with respect
to the order, that is,
\begin{equation}\label{standard}
f = a_1 \xb^{\alpha_1} + a_2 \xb^{\alpha_2} + \dots + a_s \xb^{\alpha_s},
\end{equation}
where  $\alpha_i \ne \alpha_j$ for $i \ne j$
and $1 \le i,j \le s$, and where, with respect to the specified term order,
$\alpha_1 > \alpha_2 > \cdots > \alpha_s$.
 The \emph{leading term}\index{term!leading} $LT(f)$ of $f$ is the term
      $LT(f) = a_1 \xb^{\alpha_1}$.

Let $f$ and $g$ be from  $\kxn$ with $LT(f) = a \xb^\alphab$ and
$LT(g) = b \xb^\betab$.
The \emph{least common multiple}
 of $\xb^\alphab$ and $\xb^\betab$, denoted
$LCM(\xb^\alphab,\xb^\betab)$, is the monomial
$\xb^\gamma = x_1^{\gamma_1} \cdots x_n^{\gamma_n}$ such that
$\gamma_j = \max(\alpha_j, \beta_j)$, $1 \le j \le n$, and  the
\emph{$S$-polynomial} of $f$ and $g$ is the polynomial
\[
S(f,g)=\frac{\xb^\gamma}{LT(f)}f -\frac{\xb^\gamma}{LT(g)} g.
\]

The following algorithm due to Buchberger \cite{buchberger}  produces a Gr\"{o}bner basis for
the ideal  $I=\la\fs \ra \in
\kxn $.
\begin{itemize}
\item[Step 1.] $G := \{ \fs \}$.
\item[Step 2.] For each pair $g_i, g_j \in G$, $i \ne j$, compute the
        $S$-polynomial $S(g_i, g_j)$ and  compute the remainder $r_{ij}$
of the division  $S(g_i, g_j)$ by $ G$.
\item[Step 3.] If
all $r_{ij}$ are equal to zero, output $G$,
else
add
all nonzero $r_{ij}$ to $G$ and return to Step 2.
\end{itemize}
Nowadays, all major computer algebra systems
(\Mathematica, \Maple, \Reduce, \Singular, \Macaulay\  and  many others) have routines to compute Gr\"{o}bner bases.

A Gr\"{o}bner basis  $G = \{ g_1, \dots, g_m \}$ is called
\emph{reduced}\index{Gr\"obner basis!reduced} if for all $i$, $1 \le i \le m$,
the coefficient of the leading term is 1 and
 no term of $g_i$ is divisible by any  $LT(g_j)$ for $j \ne i$.

It is well-known (see e.g. $\!$\cite{coxlittleshea})  that system (\ref{e:poly.sys})
 has a solution over $\mathbb{C}$ if and only if
the reduced Gr\"obner basis $G$ for $\la \fs \ra$ with respect to any term order on
$\mathbb{C}[x_1, \dots, x_n]$ is different from $\{ 1 \}$.
The Gr\"obner basis theory allows  to find all solutions of system
(\ref{e:poly.sys}) when the system has only finitely many solutions.
In such case  a Gr\"obner basis with respect to
the lexicographic order  is always in a ``triangular" form
(like the Gauss row-echelon form in the case of linear systems)
which means that one has an equation in a single variable, and having solved it one can substitute the roots into an equation in two variables, solve it, etc.

For a field $k$
an  \emph{affine variety} is a subset of $k^n$ that is the solution set of a system of equations  of the form
\eqref{e:poly.sys}, where $f_i$ are polynomials with coefficients
in $k$. It is denoted by $\vv(I)$, where $I$ is the ideal
generated  by $f_1, \ldots, f_m$, $I:=\la f_1,f_2,\dots,f_m\ra$.
A variety is \emph{irreducible} if it is not the union of
finitely many proper subsets, each of which is itself a variety.
Every affine variety $V$  can
be decomposed into finitely many irreducible components, that is  $V$ is expressible as
\begin{equation}\label{gevd}
V = V_1 \cup \dots \cup V_s,
\end{equation}
where each $V_j$ is irreducible and $V_j \not\subset V_k$ if $j \ne k$, and in fact this decomposition is unique up to the ordering of the components $V_j$. Thus to solve \eqref{e:poly.sys} we have
 to find the decomposition \eqref{gevd} for
$V = \vv(I)$.

A radical of the ideal $I$ is the set of polynomials
$\sqrt{I}:=\{f:f^p\in I\text{ for some }p\in\N\}.$

 An ideal $I \subset \kxn$ is called a \emph{primary ideal} if
for any pair $f, g \in \kxn$, $f g \in I$ only if either $f \in I$ or $g^p \in I$ for some $p \in \N$. An ideal $I$ is
primary if and only if $\sqrt{I}$ is prime; $\sqrt{I}$ is called the \emph{associated prime ideal of $I$}. A
\emph{primary decomposition} of an ideal $I \subset \kxn$ is a representation of $I$ as a finite intersection of
primary ideals $Q_j$, $I = \cap_{j = 1}^s Q_j$\,.
The decomposition is called a \emph{minimal} primary decomposition if
the associated prime ideals $\sqrt{Q_j}$ are all distinct and $\cap_{i \ne j} Q_i \not \subset Q_j$ for any $j$.
A minimal primary decomposition of a polynomial ideal always exists, but it is not necessarily unique.

Every ideal $I$ in $\kxn$ has a minimal primary decomposition according to the
Lasker--Noether Decomposition Theorem. 
All such decompositions have the
same number $m$ of primary ideals and the same collection of associated prime ideals.

Minimal associate primes of a polynomial ideal $I=\la f_1,f_2,\dots,f_m\ra$
can be computed using the algorithm proposed by \cite{giannitragerzacharias}, and the varieties of the minimal associate primes give then the irreducible decomposition of the variety $V(I)$ (so give the "solution" to the system $f_1=f_2=\dots f_m=0$).

\subsection{Solving Eq. \eqref{semiout}}\label{subsec:B}
Input is system \eqref{semiin}, and the output is its solution \eqref{semiout}.
\begin{verbatim}
In[20]:= Reduce[{f1 == 0 && f3 == 0 && f5 == 0 && x1 > 0 &&
    x3 > 0 && x5 > 0 && s > 0 && g > 0 && b > 0},
    {x1, x3, x5, s, b, g}] // FullSimplify

Out[20]= {x1 > 0 && x1 == x3 && x3 == x5 && s > 0 && b > 0 &&
    g == s/x1 + b/(1 + x1 + x5)}
\end{verbatim}
\subsection{Minimal associate primes}\label{subsec:C}
Minimal associate primes of ideal \eqref{I} defining the ideals $J_1, J_2, J_3$
of the decomposition \eqref{star} are:
 \begin{verbatim}
 [1]:
   _[1]=x3-x5
   _[2]=x1-x5
   _[3]=2*s*x5+s+b*x5-2*g*x5^2-g*x5
[2]:
   _[1]=x1^3*x3+x1^3*x5+x1^3-2*x1^2*x3*x5+x1^2*x5+x1^2+x1*x3^3
   -2*x1*x3^2*x5+x1*x3^2-2*x1*x3*x5^2-6*x1*x3*x5-x1*x3+x1*x5^3
   -x1*x5+x3^3*x5+x3^3+x3^2+x3*x5^3+x3*x5^2-x3*x5+x5^3+x5^2
   _[2]=b*x3^3+b*x3^2+b*x3*x5+b*x3-b*x5^3-2*b*x5^2-b*x5
   -g*x1^2*x3^2-2*g*x1^2*x3*x5-2*g*x1^2*x3-g*x1^2*x5^2
   -2*g*x1^2*x5-g*x1^2-g*x1*x3^3+g*x1*x3^2*x5-g*x1*x3^2
   +g*x1*x3*x5^2-g*x1*x3-g*x1*x5^3-3*g*x1*x5^2-3*g*x1*x5
   -g*x1+g*x3^3*x5+2*g*x3^2*x5^2+4*g*x3^2*x5+g*x3^2+g*x3*x5^3
   +4*g*x3*x5^2+4*g*x3*x5+g*x3
   _[3]=b*x1*x5+b*x1-b*x3^2-b*x3+g*x1^2*x3+g*x1^2*x5+g*x1^2
   +g*x1*x3^2+2*g*x1*x3-g*x1*x5^2+g*x1-g*x3^2*x5-g*x3*x5^2
   -2*g*x3*x5-g*x5^2-g*x5
   _[4]=b*x1*x3+b*x3-b*x5^2-b*x5-g*x1^2*x3-g*x1^2*x5-g*x1^2
   +g*x1*x3^2-g*x1*x5^2-2*g*x1*x5-g*x1+g*x3^2*x5+g*x3^2
   +g*x3*x5^2+2*g*x3*x5+g*x3
   _[5]=b*x1^2+b*x1-b*x3*x5-b*x5+g*x1^2*x3-g*x1^2*x5+g*x1*x3^2
   +2*g*x1*x3-g*x1*x5^2-2*g*x1*x5+g*x3^2*x5+g*x3^2-g*x3*x5^2
   +g*x3-g*x5^2-g*x5
   _[6]=b^2*x3^2+b^2*x3*x5+b^2*x3+b^2*x5^2+2*b^2*x5+b^2
   +b*g*x3^2*x5+2*b*g*x3^2-b*g*x3*x5^2+b*g*x3*x5+2*b*g*x3
   +b*g*x5^2+2*b*g*x5+b*g+2*g^2*x1*x3^3+2*g^2*x1*x3^2*x5
   +3*g^2*x1*x3^2+2*g^2*x1*x3*x5^2+4*g^2*x1*x3*x5+2*g^2*x1*x3
   +2*g^2*x1*x5^3+5*g^2*x1*x5^2+4*g^2*x1*x5+g^2*x1
   +2*g^2*x3^3*x5+2*g^2*x3^3+4*g^2*x3^2*x5^2+8*g^2*x3^2*x5
   +4*g^2*x3^2+2*g^2*x3*x5^3+8*g^2*x3*x5^2+9*g^2*x3*x5
   +3*g^2*x3+2*g^2*x5^3+5*g^2*x5^2+4*g^2*x5+g^2
   _[7]=2*s*x5+s-b*x1+b*x3+b*x5-2*g*x1*x3-g*x1-g*x3+g*x5
   _[8]=2*s*x3+s+b*x1+b*x3-b*x5-2*g*x1*x5-g*x1+g*x3-g*x5
   _[9]=2*s*x1+s+b*x1-b*x3+b*x5+g*x1-2*g*x3*x5-g*x3-g*x5
   _[10]=s*b+b^2*x1+b^2*x3+b^2*x5+2*b^2+b*g*x1+b*g*x3+b*g*x5
   +2*b*g+2*g^2*x1^2*x3+2*g^2*x1^2*x5+2*g^2*x1^2+2*g^2*x1*x3^2
   +4*g^2*x1*x3*x5+6*g^2*x1*x3+2*g^2*x1*x5^2+6*g^2*x1*x5
   +4*g^2*x1+2*g^2*x3^2*x5+2*g^2*x3^2+2*g^2*x3*x5^2
   +6*g^2*x3*x5+4*g^2*x3+2*g^2*x5^2+4*g^2*x5+2*g^2
   _[11]=s^2+s*g-b^2*x1-b^2*x3-b^2*x5-b^2-b*g*x1-b*g*x3
   -b*g*x5-b*g-2*g^2*x1^2*x3-2*g^2*x1^2*x5-2*g^2*x1^2
   -2*g^2*x1*x3^2-4*g^2*x1*x3*x5-5*g^2*x1*x3-2*g^2*x1*x5^2
   -5*g^2*x1*x5-3*g^2*x1-2*g^2*x3^2*x5-2*g^2*x3^2
   -2*g^2*x3*x5^2-5*g^2*x3*x5-3*g^2*x3-2*g^2*x5^2-3*g^2*x5-g^2
[3]:
   _[1]=g
   _[2]=b
   _[3]=s
\end{verbatim}

\subsection{Checking the conditions of Allwright's theorem in the 3D case}\label{subsec:D}
Here we strongly rely on the paper \cite{allwright}: we use the definitions and notations of that paper.

His equations (5) specialize into our Eq. \eqref{s1}
with the following cast: $n=3$, and for $j=1,2,3: T_j=0, h_j(x)=s+\frac{b}{1+x}, k_j(x)=-gx.$ The quantities and functions defined in this way fulfil conditions (6)--(8) in his paper. As the inverse of $k$ is
$y\mapsto-y/g$ the function $\Phi$ in (9) can be calculated as
\begin{equation*}
\frac{b^2 g+u \left(-b g^2+2 b g s-g^2 s+2 g s^2-s^3\right)-b g^2+3 b g s-b s^2-g^2 s+2 g s^2-s^3}{g u \left(-b g+g^2-2 g s+s^2\right)+g \left(-2 b g+b s+g^2-2 g s+s^2\right)}.
\end{equation*}
The derivative of $\Phi$ is negative for nonnegative arguments $u$ in accordance with the fact that the function is decreasing. Thus we have Case I with the notation of the paper. Further---lengthy---calculations show that the equation $\Phi(\Phi(u))=u$ has one positive (and one negative) real root:
\begin{equation}
u_1=\frac{-g+s+\sqrt{(g+s)^2+4bg}}{2g}>0,\quad u_2=\frac{-g+s-\sqrt{(g+s)^2+4bg}}{2g}<0,
\end{equation}
therefore case (i) of Theorem 1 of the paper applies stating the \emph{global} asymptotic stability of the unique stationary point.

\subsection{Realizations with reversible reactions}\label{subsec:frk}
Consider the equation \eqref{eq:st6D1} for the stationary points of the first 6D model:
\begin{eqnarray}
  s + b x_1 - g x_1 + s x_1 - g x_1^2 + s x_5 - g x_1 x_5 &=& 0 \nonumber\\
  s + s x_1 + b x_3 - g x_3 + s x_3 - g x_1 x_3 - g x_3^2 &=& 0 \label{eq:st6D1}\\
  s + s x_3 + b x_5 - g x_5 + s x_5 - g x_3 x_5 - g x_5^2 &=& 0.\nonumber
\end{eqnarray}
Let us note that the mass action type induced kinetic differential equation
of the reaction in Fig. \ref{fig:sixd1} is has exactly the right hand side equal to the left hand sides of the sbove equations if the reaction rate coefficients have appropriately been chosen. Therefore, based on the results by Orlov and Rozonoer \cite{orlovrozonoer2,tothnagypapp} (or using the recent generalization by Boros \cite{boroswrexistence}) one can conclude that there exists a positive stationary point of the reaction, and thus, of the original (first) 6D model also has one.
\begin{figure}
  \centering
  \includegraphics[width=300pt]{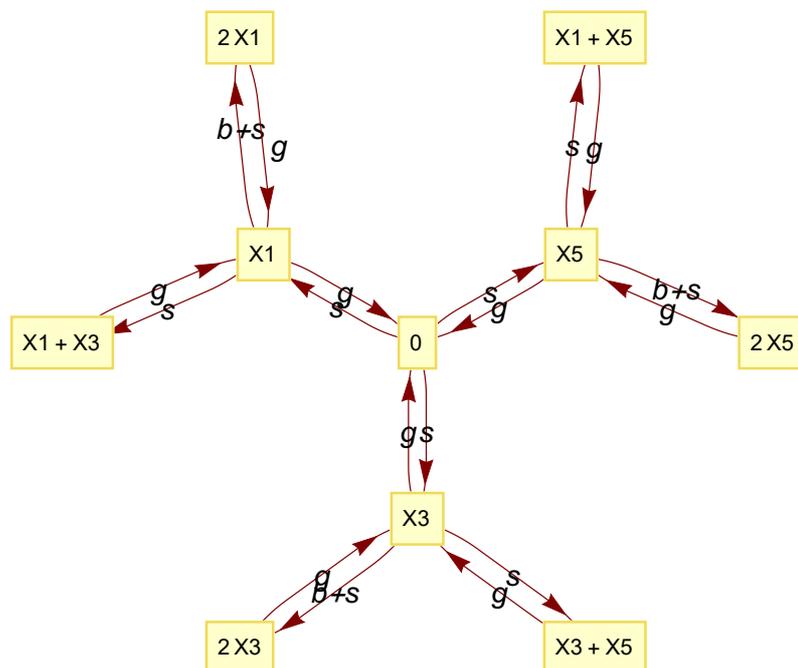}
  \caption{Feinberg--Horn--Jackson graph of a reaction leading to the stationary point of th first 6D model}\label{fig:sixd1}
\end{figure}

The same argument can be applied in the case of the other two models.
\bibliography{Lebar}

\begin{thebibliography}{10}
\providecommand{\url}[1]{{#1}}
\providecommand{\urlprefix}{URL }
\expandafter\ifx\csname urlstyle\endcsname\relax
  \providecommand{\doi}[1]{DOI~\discretionary{}{}{}#1}\else
  \providecommand{\doi}{DOI~\discretionary{}{}{}\begingroup
  \urlstyle{rm}\Url}\fi

\bibitem{allwright}
Allwright, D.J.: A global stability criterion for simple control loops.
\newblock J. Math. Biol. \textbf{4}(4), 363--373 (1977)

\bibitem{aranyitoth}
Ar{\'a}nyi, P., T{\'o}th, J.: A full stochastic description of the
  {M}ichaelis--{M}enten reaction for small systems.
\newblock Acta Biochimica et Biophysica Academiae Scientiarum Hungaricae
  \textbf{12}(4), 375--388 (1977)

\bibitem{boroswrexistence}
Boros, B.: Existence of positive steady states for weakly reversible
  mass-action systems.
\newblock arXiv preprint arXiv:1710.04732  (2017)

\bibitem{bratsunvolfsontsimringhasty}
Bratsun, D., Volfson, D., Tsimring, L.S., Hasty, J.: Delay-induced stochastic
  oscillations in gene regulation.
\newblock Proc. Natl. Acad. Sci. USA \textbf{102}(41), 14593--14598 (2005)

\bibitem{brown}
Brown, C.W.: {QEPCAD B}: a system for computing with semi-algebraic sets via
  cylindrical algebraic decomposition.
\newblock ACM SIGSAM Bulletin \textbf{38}(1), 23--24 (2004)

\bibitem{buchberger}
Buchberger, B.: Bruno {B}uchberger’s {PhD} thesis 1965: {A}n algorithm for
  finding the basis elements of the residue class ring of a zero dimensional
  polynomial ideal.
\newblock The Journal of Symbolic Computation \textbf{41}(3-4), 475--511 (2006)

\bibitem{collins}
Collins, G.E.: Quantifier elimination for the elementary theory of real closed
  fields by cylindrical algebraic decomposition.
\newblock Lecture Notes in Computer Science \textbf{33}, 134--183 (1975).
\newblock Second GI Conference, Automata Theory and Formal Languages

\bibitem{congzhoukuocunniffzhang}
Cong, L., Zhou, R., Kuo, Y., Cunniff, M., Zhang, F.: Comprehensive
  interrogation of natural {TALE DNA}-binding modules and transcriptional
  repressor domains.
\newblock Nature Communications \textbf{3}, 968 (2012)

\bibitem{coxlittleshea}
Cox, D., Little, J., O'shea, D.: Ideals, {V}arieties, and {A}lgorithms, vol.~3.
\newblock Springer, New York (2007)

\bibitem{deckerlaplagnepfisterschonemann}
Decker, W., Laplagne, S., Pfister, G., Schonemann, H.A.: {SINGULAR} 3-1 library
  for computing the prime decomposition and radical of ideals, primdec.lib
  (2010)

\bibitem{deckergreuelpfisterschonemann}
Decker, W., Laplagne, S., Pfister, G., Sch\"onemann, H.A.: {SINGULAR} 3-1-6---a
  computer algebra system for polynomial computations (2012).
\newblock \url{http://www.singular.uni-kl.de}

\bibitem{dilao}
Dil{\~a}o, R.: The regulation of gene expression in eukaryotes: bistability and
  oscillations in repressilator models.
\newblock J. Theor. Biol. \textbf{340}, 199--208 (2014)

\bibitem{dolzmannsturm}
Dolzmann, A., Sturm, T.: Redlog: {C}omputer algebra meets computer logic.
\newblock ACM Sigsam Bulletin \textbf{31}(2), 2--9 (1997)

\bibitem{elkahouiweber}
El~Kahoui, M., Weber, A.: Deciding {H}opf bifurcations by quantifier
  elimination in a software-component architecture.
\newblock The Journal of Symbolic Computation \textbf{30}(2), 161--179 (2000)

\bibitem{elowitzleibler}
Elowitz, M.B., Leibler, S.: A synthetic oscillatory network of transcriptional
  regulators.
\newblock Nature \textbf{403}(6767), 335--338 (2000)

\bibitem{erdilente}
{\'E}rdi, P., Lente, G.: Stochastic {C}hemical {K}inetics. {T}heory and
  ({M}ostly) {S}ystems {B}iological {A}pplications.
\newblock Springer Verlag (2016).
\newblock {S}pringer Series in Synergetics

\bibitem{erditoth}
{\'E}rdi, P., T{\'o}th, J.: Mathematical {M}odels of {C}hemical {R}eactions.
  {T}heory and {A}pplications of {D}eterministic and {S}tochastic {M}odels.
\newblock Princeton University Press, Princeton, New Jersey (1989)

\bibitem{frasertiwari}
Fraser, A., Tiwari, J.: Genetical feedback-repression: {II. C}yclic genetic
  systems.
\newblock J. Theor. Biol. \textbf{47}(2), 397--412 (1974)

\bibitem{gaberlebarmajerlesterdobnikarbencinajerala}
Gaber, R., Lebar, T., Majerle, A., {\v{S}}ter, B., Dobnikar, A., Ben{\v{c}}ina,
  M., Jerala, R.: Designable {DNA}-binding domains enable construction of logic
  circuits in mammalian cells.
\newblock Nature Chemical Biology \textbf{10}(3), 203--208 (2014)

\bibitem{garglohmuellersilverarmel}
Garg, A., Lohmueller, J.J., Silver, P.A., Armel, T.Z.: Engineering synthetic
  {TAL} effectors with orthogonal target sites.
\newblock Nucleic Acids Research \textbf{40}(15), 7584--7595 (2012)

\bibitem{giannitragerzacharias}
Gianni, P., Trager, B., Zacharias, G.: Gr{\"o}bner bases and primary
  decomposition of polynomial ideals.
\newblock Journal of Symbolic Computation \textbf{6}(2-3), 149--167 (1988)

\bibitem{goodwin}
Goodwin, B.C.: Oscillatory behavior in enzymatic control processes.
\newblock Advances in Enzyme Rregulation \textbf{3}, 425--437 (1965)

\bibitem{griffith}
Griffith, J.S.: Mathematics of cellular control processes {I. N}egative
  feedback to one gene.
\newblock J. Theor. Biol. \textbf{20}(2), 202--208 (1968)

\bibitem{guantespoyatos}
Guantes, R., Poyatos, J.F.: Dynamical principles of two-component genetic
  oscillators.
\newblock PLoS Computational Biology \textbf{2}(3), e30 (2006)

\bibitem{jacobmonod}
Jacob, F., Monod, J.: Genetic regulatory mechanisms in the synthesis of
  proteins.
\newblock Journal of Molecular Biology \textbf{3}(3), 318--356 (1961)

\bibitem{kianibealebrahimkhanihuhhallxieliweiss}
Kiani, S., Beal, J., Ebrahimkhani, M.R., Huh, J., Hall, R.N., Xie, Z., Li, Y.,
  Weiss, R.: {CRISPR} transcriptional repression devices and layered circuits
  in mammalian cells.
\newblock Nature Methods \textbf{11}(7), 723--726 (2014)

\bibitem{kisstoth}
Kiss, K., T{\'o}th, J.: $n$-{D}imensional ratio-dependent predator-prey systems
  with memory.
\newblock Differential Equations and Dynamical Systems \textbf{17}(1-2), 17--35
  (2009)

\bibitem{kuznetsovafraimovich}
Kuznetsov, A., Afraimovich, V.: Heteroclinic cycles in the repressilator model.
\newblock Chaos, Solitons \& Fractals \textbf{45}(5), 660--665 (2012)

\bibitem{lebarbezeljakgolobjeralakaduncpirsstrazarvuckozupancicbencinaforstnericgaberlonzaricmajerleoblaksmolejerala}
Lebar, T., Bezeljak, U., Golob, A., Jerala, M., Kadunc, L., Pir\v{s}, B.,
  Stra\v{z}ar, M., Vu\v{c}ko, D., Zupan\v{c}i\v{c}, U., Ben\v{c}ina, M.,
  Forstneri\v{c}, V., Gaber, R., Lonzari\'c, J., Majerle, A., Oblak, A., Smole,
  A., Jerala, R.: A bistable genetic switch based on designable {DNA}-binding
  domains.
\newblock Nature Communications \textbf{5}, 5007 (2014)

\bibitem{lebarjerala}
Lebar, T., Jerala, R.: Benchmarking of {TALE}-and {CRISPR/dCas9}-based
  transcriptional regulators in mammalian cells for the construction of
  synthetic genetic circuits.
\newblock ACS Synthetic Biology \textbf{5}(10), 1050--1058 (2016)

\bibitem{lohmuellerarmelsilve}
Lohmueller, J.J., Armel, T.Z., Silver, P.A.: A tunable zinc finger-based
  framework for boolean logic computation in mammalian cells.
\newblock Nucleic Acids Research \textbf{40}(11), 5180--5187 (2012)

\bibitem{mullerhofbauerendlerflammwidderschuster}
M{\"u}ller, S., Hofbauer, J., Endler, L., Flamm, C., Widder, S., Schuster, P.:
  A generalized model of the repressilator.
\newblock J. Math. Biol. \textbf{53}(6), 905--937 (2006)

\bibitem{nagypapptoth}
Nagy, A.L., Papp, D., T{\'o}th, J.: {R}eaction{K}inetics--{A} {M}athematica
  package with applications.
\newblock Chem. Eng. Sci. \textbf{83}, 12--23 (2012)

\bibitem{nissimprtlifridkinperezpineralu}
Nissim, L., Perli, S.D., Fridkin, A., Perez-Pinera, P., Lu, T.K.: Multiplexed
  and programmable regulation of gene networks with an integrated {RNA} and
  {CRISPR/Cas} toolkit in human cells.
\newblock Molecular Cell \textbf{54}(4), 698--710 (2014)

\bibitem{orlovrozonoer2}
Orlov, V.N., Rozonoer, L.I.: The macrodynamics of open systems and the
  variational principle of the local potential {II}. {A}pplications.
\newblock Journal of the Franklin Institute \textbf{318}(5), 315--347 (1984)

\bibitem{qilarsongilbertdoudnaweissmanarkinlim}
Qi, L.S., Larson, M.H., Gilbert, L.A., Doudna, J.A., S., W.J., P., A.A., Lim,
  W.A.: Repurposing {CRISPR} as an {RNA}-guided platform for sequence-specific
  control of gene expression.
\newblock Cell \textbf{152}(5), 1173--1183 (2013)

\bibitem{romanovskishafer}
Romanovski, V., Shafer, D.: The center and cyclicity problems: a computational
  algebra approach.
\newblock Birkh\"auser, Boston, Basel, Berlin (2009)

\bibitem{sipostotherdi}
Sipos, T., T{\'o}th, J., {\'E}rdi, P.: Stochastic simulation of complex
  chemical reactions by digital computer, {I}. {T}he model.
\newblock React. Kinet. Catal. Lett. \textbf{1}(1), 113--117 (1974)

\bibitem{sturmredlog}
Sturm, T.: {\textbf{\textsc{Redlog}}} online resources for applied quantifier
  elimination.
\newblock Acta Academiae Aboensis, Ser. B \textbf{67}(2), 177--191 (2007)

\bibitem{sturmweberabdelrahmanelkahoui}
Sturm, T., Weber, A., Abdel-Rahman, E.O., M., E.K.: Investigating algebraic and
  logical algorithms to solve {H}opf bifurcation problems in algebraic biology.
\newblock Mathematics in Computer Science \textbf{2}(3), 493--515 (2009)

\bibitem{thieffrythomas}
Thieffry, D., Thomas, R.: Qualitative analysis of gene networks.
\newblock In: Biocomputing'98-Proceedings of The Pacific Symposium, pp. 77--88
  (1997)

\bibitem{tiggesmarquezlagostellingfussenegger}
Tigges, M., Marquez-Lago, T.T., Stelling, J., Fussenegger, M.: A tunable
  synthetic mammalian oscillator.
\newblock Nature \textbf{457}(7227), 309--312 (2009)

\bibitem{tothnagypapp}
T\'oth, J., Nagy, A.L., Papp, D.: Reaction {K}inetics: {E}xercises, {P}rograms
  and {T}heorems.
\newblock Springer {N}ature, Berlin, Heidelberg, New York (2018).
\newblock In press

\bibitem{tsaichoimapomereningtangferrell}
Tsai, T.Y., Choi, Y.S., Ma, W., Pomerening, J.R., Tang, C., Ferrell, J.E.J.:
  Robust, tunable biological oscillations from interlinked positive and
  negative feedback loops.
\newblock Science \textbf{321}(5885), 126--129 (2008)

\bibitem{tylershiuwalton}
Tyler, J., Shiu, A., Walton, J.: Revisiting a synthetic intracellular
  regulatory network that exhibits oscillations.
\newblock arxiv.org (arXiv preprint arXiv:1808.00595), 1--25 (2018)

\bibitem{volperthudjaev}
Vol'pert, A.I., Hudjaev, S.I.: Analysis in classes of discontinuous functions
  and the equations of mathematical physics.
\newblock Martinus Nijhoff Publishers, Dordrecht, Boston, Lancaster (1985).
\newblock In Russian: Nauka, Moscow, 1975

\bibitem{wangjingchen}
Wang, R., Jing, Z., Chen, L.: Modelling periodic oscillation in gene regulatory
  networks by cyclic feedback systems.
\newblock Bull. Math. Biol. \textbf{67}(2), 339--367 (2005)

\bibitem{widdermaciasole}
Widder, S., Mac{\'\i}a, J., Sol{\'e}, R.: Monomeric bistability and the role of
  autoloops in gene regulation.
\newblock PloS One \textbf{4}(4), e5399 (2009)

\bibitem{mathematica}
WRI: Mathematica 11.3 (2018).
\newblock \url{http://www.wolfram.com}

\end{thebibliography}
\bibliographystyle{spmpsci}
\end{document}